\title{Bounding Multiplicity by Shifts in the Taylor Resolution}
\author{Michael Goff}
\newtheorem{theorem}{Theorem}[section]
\newtheorem{proposition}[theorem]{Proposition}
\newtheorem{corollary}[theorem]{Corollary}
\newtheorem{lemma}[theorem]{Lemma}
\newtheorem{conjecture}[theorem]{Conjecture}
\newcommand{\K}{\Gamma}	% Generic simplicial complex
\newcommand{\F}{\mathcal{F}} % Generic free resolution
\newcommand{\T}{\mathcal{T}} % Taylor resolution
\newcommand{\field}{{\bf k}} % The base field
\newcommand{\m}{{\mu}} % Generic monomial
\newcommand{\codim}{\mbox{\upshape codim}\,}
\newcommand{\LCM}{\mbox{\upshape lcm}\,}
\newcommand{\lk}{\mbox{\upshape lk}\,}
\newcommand{\GEN}{\mbox{\upshape GEN}\,}
\newcommand{\GCD}{\mbox{\upshape gcd}\,}
\newcommand{\len}{\mbox{\upshape len}\,}
\def\proof{\smallskip\noindent {\it Proof: \ }}
\def\proofof#1{\smallskip\noindent {\it Proof of #1: \ }}
\def\endproof{\hfill$\square$\medskip}
\begin{document}

\begin{abstract}
A weaker form of the multiplicity conjecture of Herzog, Huneke, and Srinivasan is proven for two classes of monomial ideals: quadratic monomial ideals and squarefree monomial ideals with sufficiently many variables relative to the Krull dimension.  It is also shown that tensor products, as well as Stanley-Reisner ideals of certain unions, satisfy the multiplicity conjecture if all the components do.  Conditions under which the bounds are achieved are also studied.
\end{abstract}

\date{August 15, 2007}

\maketitle
\section{Introduction}
In this paper we examine a relaxation of the multiplicity conjecture by using non-minimal free resolutions.

Throughout the paper we work with the polynomial ring
   $S=\field[x_1, \ldots, x_n]$
over an arbitrary field $\field$. If $I\subset S$ is a homogeneous ideal, 
then the ($\mathbb{Z}$-graded) \textit{Betti numbers} of $S/I$, 
$\beta_{i,j}=\beta_{i,j}(S/I)$,
   are the invariants that appear
in the minimal free resolution of $S/I$ as an $S$-module:
\[ 0 \rightarrow
\bigoplus_j S(-j)^{\beta_{l,j}} \rightarrow \ldots \rightarrow 
\bigoplus_j S(-j)^{\beta_{2,j}} \rightarrow \bigoplus_j 
S(-j)^{\beta_{1,j}} \rightarrow S \rightarrow S/I \rightarrow 0. \] 
Here $S(-j)$ denotes $S$ with grading shifted by $j$ and $l$ 
denotes the length of the resolution. In particular, $l \geq \codim(I)$.

Our main objects of study are the \textit{maximal and minimal shifts} 
in the resolution of $S/I$ defined by 
$M_i=M_i(S/I)=\max\{j : \beta_{i,j}\neq 0\}$ and 
$m_i=m_i(S/I)=\min \{j : \beta_{i,j}\neq 0\}$ for $i=1, \ldots, l$, respectively.
The following conjecture due to Herzog, Huneke, and 
Srinivasan \cite{HerzSr98} is known as the multiplicity conjecture.

\begin{conjecture}  \label{multiplicity-conj} 
Let $I\subset S$ be a homogeneous ideal of codimension $c$. 
Then the multiplicity of $S/I$, $e(S/I)$, satisfies the following upper bound:
$$ e(S/I) \leq ( \prod_{i=1}^{c} M_i )/c!.
$$
Moreover, if $S/I$ is Cohen-Macaulay, then also

$$ e(S/I) \geq ( \prod_{i=1}^{c} m_i )/c!.
$$
\end{conjecture}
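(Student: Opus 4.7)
The plan is to split the statement into the upper and (Cohen-Macaulay) lower bound and reduce each to the case of a \emph{pure} resolution, in which for every $i$ exactly one Betti number $\beta_{i,d_i}$ is nonzero. For a Cohen-Macaulay $S/I$ with pure resolution of type $(d_1,\dots,d_c)$ the Herzog-K\"uhl equations force
\[
e(S/I)=\frac{1}{c!}\prod_{i=1}^c d_i,
\]
so both bounds in Conjecture~\ref{multiplicity-conj} are attained with equality. For a non-CM pure ideal one would first reduce modulo a generic sequence of parameters, since this operation leaves the $M_i$ unchanged and does not decrease $e(S/I)$, returning to the Cohen-Macaulay pure case.

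Next I would attempt to write an arbitrary Betti diagram as a nonnegative rational combination
\[
\beta(S/I)=\sum_k \lambda_k\,\pi^{(k)},\qquad \lambda_k\ge 0,
\]
of pure diagrams $\pi^{(k)}$ whose degree sequences $(d_1^{(k)},\dots,d_c^{(k)})$ satisfy $m_i\le d_i^{(k)}\le M_i$ for every $i$. Because multiplicity is a linear functional of the Betti numbers (through the Hilbert series), such a decomposition immediately gives
\[
e(S/I)=\sum_k \lambda_k\frac{\prod_i d_i^{(k)}}{c!}\le \frac{\prod_i M_i}{c!},
\]
and symmetrically the lower bound in the Cohen-Macaulay case. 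To produce the $\pi^{(k)}$ I would greedily subtract the largest positive multiple of the pure diagram $\pi(M_1,\dots,M_c)$ that keeps every Betti entry nonnegative, then repeat with the next lexicographically maximal shift sequence lying in $[m_i,M_i]$, inducting on the total Betti sum and checking at each step that the remainder still satisfies the Herzog-K\"uhl-type linear relations.

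The decisive obstacle is justifying that this peeling terminates in a genuine convex decomposition, i.e.\ that the cone of codimension-$c$ Betti diagrams is generated by pure diagrams with shifts in the interval $[m_i,M_i]$. In the Cohen-Macaulay range the Herzog-K\"uhl identities cut the cone down to something that looks simplicial, but realizing an arbitrary prescribed pure degree sequence as the resolution of an actual module is nontrivial; I would try to handle this via iterated Eagon-Northcott complexes or generic linkage, and then verify that taking convex combinations of these extremal diagrams exhausts the cone. A separate difficulty is the non-Cohen-Macaulay lower bound: without the exact Herzog-K\"uhl formula one loses equality even in the pure case, so that direction of the conjecture seems to require an independent deformation argument reducing to a Cohen-Macaulay quotient of the same multiplicity and no smaller minimal shifts.
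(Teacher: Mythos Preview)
The statement you are attempting to prove is Conjecture~\ref{multiplicity-conj}, and the paper does \emph{not} prove it. It is presented as the motivating open problem of Herzog, Huneke, and Srinivasan; the paper's actual results concern only the weaker $\F$-multiplicity conjecture (Conjecture~\ref{WeakMC}) for the Taylor resolution, and only for special classes of monomial ideals (quadratic, almost quadratic, or with many variables relative to the dimension). So there is no ``paper's own proof'' to compare against.

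Your sketch is the Boij--S\"oderberg strategy: decompose the Betti table as a nonnegative rational combination of pure diagrams with shift sequences trapped between the $m_i$ and $M_i$, then use linearity of multiplicity and the Herzog--K\"uhl formula on each pure piece. You correctly locate the essential gap yourself: establishing that the cone of Betti tables of Cohen--Macaulay modules of codimension $c$ is spanned by pure diagrams is precisely the hard theorem. Your proposed ``greedy peeling'' does not by itself guarantee nonnegativity of the remainder at each step, nor that every pure degree sequence in the box $\prod_i [m_i,M_i]$ is realized by an actual module; the Eagon--Northcott and linkage constructions you mention do not produce arbitrary pure types. These are not minor technicalities---they constitute the entire content of the Eisenbud--Schreyer theorem (which postdates the paper), and without them the argument is a heuristic, not a proof. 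The non-Cohen--Macaulay upper bound is an additional difficulty: your reduction ``modulo a generic sequence of parameters'' changes the resolution in general, so the claim that the $M_i$ are preserved is false as stated.

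In short: the paper contains no proof of this conjecture, and your proposal has the genuine gaps you already flag, plus an incorrect reduction step in the non-CM case.
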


The multiplicity conjecture was first motivated by a result of Huneke and Miller \cite{HuMill} which states that if $S/I$ is Cohen-Macaulay and has a pure resolution (that is, $m_i = M_i$ for $1 \leq i \leq c$), then $e(S/I) = \prod_{i=1}^c m_i / c!$.  Since then there has been much additional evidence, including many papers establishing the multiplicity conjecture for special classes of ideals.  Paper \cite{FrancSr} provides an excellent overview of the major results.  However, a general proof remains elusive.

We may instead use an arbitrary free resolution in the place of the minimal free resolution.  Let $I$ be a homogeneous ideal of $S$.  Let $\F'$ be the minimal free resolution of $S/I$ and let $\F$ be an arbitrary free resolution.  If $\beta_{ij}(\F)$ are the $\mathbb{Z}$-graded Betti numbers of $\F$, then $\beta_{ij}(\F) \geq \beta_{ij}(\F')$ by the minimality of $\F'$.  Let $M_i(\F)=\max\{j : \beta_{i,j}(\F)\neq 0\}$ and $m_i(\F)=\min \{j : \beta_{i,j}(\F)\neq 0\}$.  It follows that $M_i(\F) \geq M_i(\F')$ and $m_i(\F) \leq m_i(\F')$.  Hence we obtain a weaker from of the multiplicity conjecture.

\begin{conjecture}  \label{WeakMC} 
Let $I\subset S$ be a homogeneous ideal of codimension $c$.  Let $\F$ be an arbitrary free resolution of $S/I$.
Then $e(S/I)$ satisfies the following upper bound:
$$ e(S/I) \leq ( \prod_{i=1}^{c} M_i(\F) )/c!.
$$
Moreover, if $S/I$ is Cohen-Macaulay, then also

$$ e(S/I) \geq ( \prod_{i=1}^{c} m_i(\F) )/c!.
$$
\end{conjecture}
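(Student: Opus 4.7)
Since the conjecture permits \emph{any} free resolution, the plan is to exploit this flexibility by working with resolutions whose shifts admit a transparent combinatorial description. For a monomial ideal with minimal generators $m_1,\ldots,m_r$, the natural candidate is the Taylor resolution $\T$: its $i$-th term has basis indexed by $i$-subsets $\sigma\subset\{1,\ldots,r\}$, with each basis element sitting in degree $\deg(\LCM_{j\in\sigma} m_j)$. Consequently $M_i(\T)$ (respectively $m_i(\T)$) is simply the maximum (respectively minimum) of $\deg(\LCM_{j\in\sigma} m_j)$ over all $i$-subsets $\sigma$, so the conjecture reduces to a purely combinatorial statement about the overlap structure of the generators.

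First I would restrict to squarefree monomial ideals and translate everything into the Stanley--Reisner dictionary. If $I=I_\Delta$ has codimension $c$, then $e(S/I_\Delta)$ counts the facets of $\Delta$ of maximum dimension, the generators correspond to minimal non-faces, and the Taylor shifts $\deg(\LCM_{j\in\sigma} m_j)$ correspond to cardinalities of unions of such non-faces. For the upper bound $c!\cdot e(S/I_\Delta)\leq\prod_{i=1}^c M_i(\T)$ I would fix one top-dimensional facet $F$ and, by a greedy/inductive choice, build a chain $\sigma_1\subset\sigma_2\subset\cdots\subset\sigma_c$ of minimal-non-face subsets whose supports strictly increase (giving $|\LCM|\geq i+\text{const}$ at step $i$), multiplied over $i$ to dominate $c!\cdot e$. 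For the lower bound the situation is actually friendlier: because $\T$ is very non-minimal, the $m_i(\T)$ tend to be small—often just $i+1$ when the generators are squarefree of low degree—so $\prod m_i(\T)/c!$ is a weak lower bound on $e(S/I_\Delta)$ that should follow from rough face-counting (e.g., each top facet contributes enough ``room'' to accommodate the product).

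To extend beyond squarefree ideals I would use polarization, since it preserves codimension, multiplicity, and graded Betti numbers, reducing the monomial case to the squarefree case. For the asserted stability under tensor products and certain Stanley--Reisner unions, the plan is inductive: tensor the Taylor resolutions of the factors, observe that $M_i$ and $m_i$ behave (sub)multiplicatively in the sense needed, and combine this with multiplicativity of $e$ and additivity of $c$.

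The main obstacle, and the reason only restricted classes are treated in the paper, is that $M_i(\T)=\deg(\LCM_{j=1}^r m_j)$ saturates for all $i$ past some threshold much smaller than $c$; so when there are few variables relative to the codimension, or when a few large generators dominate all the LCM-degrees, the product $\prod_{i=1}^c M_i(\T)$ grows too slowly to control $c!\cdot e(S/I)$. Finding enough ``independent growth'' of LCM-degrees as $i$ increases to keep $\prod M_i(\T)$ comfortably above $c!\cdot e(S/I)$—which in turn explains why hypotheses like \emph{quadratic} or \emph{sufficiently many variables relative to Krull dimension} are invoked—is where essentially all the real combinatorial work will concentrate.
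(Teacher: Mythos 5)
The statement you are addressing is a \emph{conjecture}, not a theorem: the paper does not prove it in general, and indeed cannot without settling the Herzog--Huneke--Srinivasan conjecture itself, since Conjecture~\ref{WeakMC} for the minimal free resolution is exactly Conjecture~\ref{multiplicity-conj}. The paper's only remark immediately attached to the statement is the observation that $\beta_{ij}(\F)\geq\beta_{ij}(\F')$ for any resolution $\F$ dominating the minimal one $\F'$, hence $M_i(\F)\geq M_i(\F')$ and $m_i(\F)\leq m_i(\F')$, so Conjecture~\ref{WeakMC} is a weakening of Conjecture~\ref{multiplicity-conj}. Everything substantive the paper proves is a special case of the Taylor version of this conjecture (quadratic ideals, squarefree ideals with $n$ large relative to $d$, tensor products, certain unions). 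Your proposal correctly recognizes this and is therefore a research plan rather than a proof; it cannot be compared to ``the paper's proof'' of the conjecture because no such proof exists.

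As a sketch of how the paper attacks the tractable cases, your plan is close to what the paper actually does: polarization to reduce to squarefree ideals and the Stanley--Reisner dictionary; the greedy chain of generator subsets with strictly growing lcm-support to bound $\tilde M_i$ from below (this is precisely the inductive argument in the proof of Theorem~\ref{LargeComplexUpper} together with Lemma~\ref{IncM}); the observation that $\tilde m_i\leq i+1$ combined with the Cohen--Macaulay bound $f_{d-1}(\K)\geq n-d+1$ for the lower bound (Theorem~\ref{FlagLB}); and tensoring Taylor resolutions together with submultiplicativity of the shift sequences and multiplicativity of $e$ (Section~\ref{TensorSection}, Lemma~\ref{mSeq}). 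You also correctly identify the obstruction: $\tilde M_i$ saturates at the total lcm-degree. What your sketch omits, and what carries the actual weight in the paper, are the quantitative devices that turn this outline into proofs: the Tur\'an-type count guaranteeing a small vertex subset already supports $n-d$ generators, the Stirling estimates that fix the thresholds $n>24d+3$ and $n\geq 9d+1$, and especially the union-decomposition machinery (Theorem~\ref{Union}, Lemma~\ref{HomRed}, Lemma~\ref{n3d}) together with the degree-$3$-vertex versus path/cycle case split on the complement graph $G$, which drives the flag-complex induction. Without those, ``fix a facet and build a chain'' does not connect the chain's growth to counting \emph{all} $(d-1)$-faces, which is what $e(S/I_\K)=f_{d-1}(\K)$ requires.
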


For an ideal $I$, Conjecture \ref{WeakMC} holds whenever Conjecture \ref{multiplicity-conj} holds.  We will refer to Conjecture \ref{WeakMC} as the $\F$-multiplicity conjecture.

In particular, we study Conjecture \ref{WeakMC} for the \textit{Taylor resolution} of a monomial ideal.  We refer to this case of Conjecture \ref{WeakMC} as the Taylor conjecture.  Partial cases of the Taylor conjecture were settled in
Corollary 4.3 and Theorem 5.3 of \cite{Herzog-Srinivasan2004}.  Suppose $I$ is a codimension $c$ ideal minimally generated by monomials $\GEN(I) := \{\m_1,\ldots,\m_r\}$.  The Taylor resolution is a cellular resolution, in the sense of \cite{MillSt}, supported on the labeled simplex with $r$ vertices, labeled $\m_j$, $1 \leq j \leq r$.  For more information on cellular resolutions, see Chapter 4 of \cite{MillSt}.  In particular, the $\mathbb{Z}$-graded Betti numbers of the Taylor resolution $\T$ are 
\begin{equation}
\label{TaylorBetti}
\beta_{ij}(\T) = |\{T \subset \GEN(I): |T| = i, \deg \LCM_{\m_k \in T} \m_k = j \}|, \quad 1 \leq i \leq c.
\end{equation}

We will denote the minimal and maximal shifts in the Taylor resolution by $\tilde{m}_i = \tilde{m}_i(S/I)$ and $\tilde{M}_i = \tilde{M}_i(S/I)$ respectively.  From (\ref{TaylorBetti}), we calculate 
\begin{equation}
\tilde{m}_i = \min \{|T|: T \subset \GEN(I), |T| = i, \deg \LCM_{\m_k \in T} \m_k = j \}
\end{equation}
and 
\begin{equation}
\label{TaylorM}
\tilde{M}_i = \max \{|T|: T \subset \GEN(I), |T| = i, \deg \LCM_{\m_k \in S} \m_k = j \}.
\end{equation}

The outline of the paper is as follows.  In Section \ref{Prelim} we review necessary background on the simplicial complexes and Stanley-Reisner ideals.  In Section \ref{TensorSection}, we prove that if $S/I$ and $S'/I'$ are two graded rings that satisfy the $\F$ and $\F'$-multiplicity conjectures respectively, then $S/I \otimes_\field S'/I'$ satisfies the $(\F \otimes \F')$-multiplicity conjecture.  In Section \ref{UnionSection}, we look at several results on when the $\F$ and $\F'$-multiplicity conjectures on Stanley-Reisner rings $S/I_\K$ and $S'/I_{\K'}$ imply the $\hat{\F}$-multiplicity conjecture on $\tilde{S}/I_{\K \cup \K'}$.  In Section \ref{LargeSection}, we establish the Taylor conjecture for squarefree monomial ideals of given Krull dimension $d$ and sufficiently many variables relative to $d$.  In Section \ref{QuadraticSection}, we prove the Taylor conjecture for quadratic monomial ideals and its upper bound part for monomial ideals for which all but one of the minimal generators has degree two.  We note that while this paper was in preparation, \cite{Kummini} appeared with an an alternate proof of the result on quadratic ideals.

\section{Preliminaries on Simplicial Complexes}
\label{Prelim}
In considering resolutions of monomial ideals, we often reduce to the case of squarefree monomial ideals via the method of \textit{polarization}.  We briefly recall this construction.  Let $S = \field[x_1,\ldots,x_n]$ as before, and let $I$ be a monomial ideal with $\GEN(I) = \{\m_1,\ldots,\m_r\}$ and $\m_j = x_1^{p_{1,j}}\ldots x_n^{p_{n,j}}$ for $1 \leq j \leq r$.  For $1 \leq i \leq n$, let $d_i$ be the maximum exponent of $x_i$ in $\GEN(I)$.  Let $$S' = \field[x_{1,1},\ldots,x_{1,d_1},\ldots ,x_{n,1},\ldots,x_{n,d_n}].$$  For $1 \leq j \leq r$, define $$\m_j' := \prod_{i=1}^n  x_{i,1}x_{i,2}\cdots x_{i, p_{i,j}} ,$$ and let $I' := (\m_1',\ldots,\m_r') \subset S'$.  We say that $I'$ is the \textit{polarization} of $I$.

The polarization $I'$ of a monomial ideal $I$ is a squarefree ideal, and $S'/I'$ has the same codimension, Betti numbers, and Taylor Betti numbers as $S/I$ \cite[pp. 44-45]{MillSt}.  Since the multiplicity of $S/I$ can be calculated from these invariants, $S/I$ and $S'/I'$ also have the same multiplicity.  Hence the following result holds.

\begin{proposition}
Let $I'$ be the polarization of a monomial ideal $I$.  Then $S/I$ satisfies the multiplicity / Taylor conjecture if and only if $S'/I'$ satisfies the multiplicity / Taylor conjecture.
\end{proposition}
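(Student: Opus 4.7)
The proposition is essentially a direct consequence of the invariance facts collected in the paragraph immediately above it, so my plan is just to unpack what "same codimension, Betti numbers, Taylor Betti numbers, and multiplicity" means for each conjecture in turn.

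First I would observe that both the multiplicity conjecture (Conjecture \ref{multiplicity-conj}) and the Taylor conjecture (the special case of Conjecture \ref{WeakMC} applied to $\T$) are statements involving exactly four pieces of data attached to the quotient ring: the codimension $c$, the multiplicity $e$, the minimal shifts $m_i$ (respectively $\tilde m_i$), and the maximal shifts $M_i$ (respectively $\tilde M_i$). The shifts are computed directly from the $\mathbb{Z}$-graded Betti numbers via $M_i = \max\{j: \beta_{i,j} \ne 0\}$ and similarly for the minimal shifts, and by (\ref{TaylorBetti}) the Taylor shifts are computed from the Taylor Betti numbers in the same way.

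Next I would invoke the cited fact from \cite[pp.~44--45]{MillSt} that polarization preserves the codimension, the minimal graded Betti numbers $\beta_{i,j}$, and the Taylor Betti numbers $\beta_{i,j}(\T)$. Since the multiplicity of a graded quotient of a polynomial ring can itself be extracted from the codimension and the graded Betti numbers (for instance via the Hilbert series, or directly through the alternating sum formulas), the multiplicity is preserved as well—and this is what the paragraph above the proposition asserts. Consequently $c(S/I) = c(S'/I')$, $e(S/I) = e(S'/I')$, $m_i(S/I) = m_i(S'/I')$, $M_i(S/I) = M_i(S'/I')$, $\tilde m_i(S/I) = \tilde m_i(S'/I')$, and $\tilde M_i(S/I) = \tilde M_i(S'/I')$.

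Finally, the inequalities defining the two versions of the conjecture (the upper bound for arbitrary $S/I$ and the lower bound when $S/I$ is Cohen-Macaulay) are relations purely among these numerical invariants. Since polarization also preserves the Cohen-Macaulay property (it is well known that $S/I$ is Cohen-Macaulay if and only if $S'/I'$ is, which again follows from the equality of graded Betti numbers together with Auslander-Buchsbaum), each inequality holds for $S/I$ if and only if the identical inequality holds for $S'/I'$. There is no real obstacle here; the only point requiring a moment of care is to confirm that the Cohen-Macaulay hypothesis in the lower-bound clause transfers between $S/I$ and $S'/I'$, which it does by the equality of projective dimensions coming from the equality of Betti tables.
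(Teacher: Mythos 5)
Your proposal is correct and follows essentially the same route as the paper: cite the preservation of codimension, minimal Betti numbers, and Taylor Betti numbers under polarization, note that multiplicity is determined by these, and conclude. You add one small but worthwhile clarification the paper leaves implicit, namely that the Cohen-Macaulay hypothesis in the lower-bound clause transfers because equal Betti tables give equal projective dimensions and hence, via Auslander--Buchsbaum and the equality of codimensions, equal depths.
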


The advantage of polarization is that every squarefree monomial ideal is the Stanley-Reisner ideal of a simplicial complex.  One can then use combinatorial and topological methods available for simplicial complexes to study the multiplicity and Taylor conjectures.

A \textit{simplicial complex} $\K$ is a collection of subsets, called \textit{faces}, of $[n]=\{1,2, \ldots, n\}$, such that $\K$ is closed under inclusion and for all $i \in [n]$, $\{i\} \in \K$.  We will also refer to $[n]$ as $V(\K)$, or the \textit{vertex set} of $\K$.  The \textit{dimension} of a face $F \in \K$ is $|F|-1$, while the \textit{dimension} of $\K$ is the largest dimension of a face of $\K$.  If $W \subseteq [n]$, let $\K[W]$ denote the \textit{induced subcomplex} on $W$.  The vertex set of $\K[W]$ is $W$, and the faces of $\K[W]$ are the faces of $\K$ that are contained in $W$.  As shorthand, we denote $\K[V(\K)-v]$ by $\K - v$.  If $F \in \K$, define the \textit{link} of $F$, denoted $\lk_{\K}(F)$, as the simplicial complex $\{G-F: F \subset G \in \K\}$.

If $\K$ is a simplicial complex on the vertex set 
$[n]:=\{1, 2, \ldots, n\}$, then its \textit{Stanley-Reisner ideal} 
(or the \textit{face ideal}), $I_\K$, 
is the ideal generated by the squarefree monomials 
corresponding to non-faces of $\K$, that is, $$ I_\K = (
       x_{i_1}\cdots x_{i_k} \, : \, \{i_1<\cdots <i_k\}\notin \K
              ), $$
and the \textit{Stanley-Reisner ring} (or the \textit{face ring}) 
of $\K$ is $S/I_\K$.  For more information on Stanley-Reisner rings, see \cite{BrHerz} and \cite{St96}.

We say that $\K$ is Cohen-Macaulay if $S/I_{\K}$ is Cohen-Macaulay.  We also say that $\K$ satisfies the $\F$-multiplicity conjecture when $S/I_\K$ satisfies it, and denote $m_i(S/I_\K,\F)$ and $M_i(S/I_\K,\F)$ by $m_i(\K,\F)$ and $M_i(\K,\F)$ respectively.

Various combinatorial and topological invariants of $\K$ 
are encoded in the algebraic invariants of $I_\K$ 
and vice versa \cite{BrHerz, St96}.
The Krull dimension of $S/I_\K$, $\dim  S/I_\K$, 
and the topological dimension of $\K$, $\dim \K$, are related by
$\dim  S/I_\K =\dim \K +1$ and so
 $$       
\codim(I_\K)=n-\dim\K-1.
$$

If $\F$ is the minimal free resolution, then Hochster's formula for the Betti numbers \cite[Theorem II.4.8]{St96} yields the following formulas for the minimal and maximal shifts of $\K$:

\begin{eqnarray}  \label{M-interpr}
M_i(\K)&=&\max\{|W| \, : \, W\subseteq[n] \mbox{ and }
              \tilde{H}_{|W|-i-1}(\K[W]; \field)\neq 0\}, \\ 
\label{m-interpr} m_i(\K)&=&\min\{|W| \, : \, W\subseteq[n] \mbox{ and }
              \tilde{H}_{|W|-i-1}(\K[W]; \field)\neq 0\}.
\end{eqnarray}
Here and in the rest of the paper, $\tilde H_i(\K; \field)$ denotes the $i$th reduced simplicial homology of $\K$ with coefficients in $\field$.  We also use $\tilde H_i(\K)$ when $\field$ is implicit.

The Hilbert series of $S/I_\K$ is 
determined by knowing the number of faces in each dimension.  
Specifically, let  $f_i = f_i(\K)$  be the number of $i$-dimensional faces.
 By convention, $f_{-1}=1$ with the empty set as the unique face of dimension minus one. 
 Then, 
$$
\sum^\infty_{i=0} \dim_\field (S/I_\K)_i \lambda^i = 
\frac{h_0 + h_1 \lambda +\dots + h_d \lambda^d}{(1-\lambda)^d},
$$ 
where, $(S/I_\K)_i$ is the $i$-th graded component of 
$S/I_\K$, $d=\dim\K+1=\dim  S/I_\K$, and
 \begin{equation} \label{h-vector}
    h_i = \sum^i_{j=0} (-1)^{i-j} \binom{d-j}{d-i} f_{j-1}.
\end{equation}
The multiplicity $e(S/I_\K)$ is $f_{d-1}(\K)$ which in turn is $h_0 + \dots + h_d.$

We also need the following definitions related to simplicial complexes.  Suppose $\K$ and $\K'$ are simplicial complexes.  We define the \textit{simplicial join} of $\K$ and $\K'$, $\K \star \K'$, as follows.  $V(\K \star \K') = V(\K) \coprod V(\K')$, and $$\K \star \K' = \{F \cup G: F \in \K, G \in \K'\}.$$ Hence the minimal non-faces of $\K \star \K'$ are precisely the minimal non-faces of $\K$ and $\K'$.  It follows that $(S \otimes_\field S') / I_{\K \star \K'} = (S/I_\K) \otimes_{\field} (S'/I_{\K'})$.

We say that $\K$ is a \textit{flag} simplicial complex if $I_{\K}$ is a quadratic ideal.  Equivalently, the minimal non-faces of $\K$ have two vertices.

Let $a = (a_1,\ldots,a_k)$ be a positive integer vector such that $\sum_{i=1}^{k}a_i = d$.  We say that a $(d-1)$-dimensional complex $\K$ is $a$-$balanced$ if there is a coloring of the vertices of $\K$ with colors $\{1,\ldots,k\}$ with the property that every face of $\K$ consists of at most $a_i$ vertices of color $i$ for all $1 \leq i \leq k$.  A \textit{balanced} complex, sometimes called a \textit{completely balanced} complex, is a $(1,\ldots,1)$-balanced complex.  Balanced and completely balanced complexes were introduced by Stanley in \cite{St79}.

\section {Tensor product of two resolutions}
\label{TensorSection}
In this section we prove that the multiplicity conjecture applies to the tensor products of two resolutions when it applies to the two resolutions individually.  Also, we characterize the circumstances under which the tensor product of two resolutions can be pure.  For the following theorem, let $(S/I_,\F)$ and $(S'/I',\F')$ be two (not necessarily Cohen-Macaulay) rings with free resolutions.  Let the codimensions of $I$ and $I'$ respectively be $c$ and $c'$.
\begin{theorem}
\label{TenRed}
If $S/I$ and $S'/I'$ satisfy the lower bound (resp. upper bound) inequalities of the $\F$- and $\F'$-multiplicity conjectures, then $(S/I) \otimes (S'/I')$ also satisfies the lower bound (resp. upper bound) inequality of the $\F \otimes \F'$-multiplicity conjecture.
\end{theorem}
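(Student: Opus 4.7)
The plan is to reduce the tensor-product statement to a purely combinatorial inequality among the shifts and then to attack that inequality directly. The first step is to assemble the standard facts. The Hilbert series factorizes as $H_{(S/I)\otimes(S'/I')}(t)=H_{S/I}(t)\cdot H_{S'/I'}(t)$, from which multiplicities multiply, $e_{\otimes}=e(S/I)\cdot e(S'/I')$, and codimensions add, giving $c_{\otimes}=c+c'$. Moreover $\F\otimes\F'$ is a free resolution of $(S/I)\otimes(S'/I')$ whose Betti numbers are given by the Cauchy-product formula $\beta_{k,j}(\F\otimes\F')=\sum_{i+i'=k,\ j_1+j_2=j}\beta_{i,j_1}(\F)\beta_{i',j_2}(\F')$, whence
\[
M_k(\F\otimes\F')=\max_{i+i'=k}\bigl(M_i(\F)+M_{i'}(\F')\bigr),\qquad m_k(\F\otimes\F')=\min_{i+i'=k}\bigl(m_i(\F)+m_{i'}(\F')\bigr),
\]
with the convention $M_0=m_0=0$.

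Combined with the hypothesized bounds $e(S/I)\leq\prod_{i=1}^{c}M_i(\F)/c!$ and $e(S'/I')\leq\prod_{j=1}^{c'}M_j(\F')/c'!$ (and their lower-bound analogues, which additionally require that the tensor product of two Cohen--Macaulay rings is Cohen--Macaulay), the upper-bound conclusion for $(S/I)\otimes(S'/I')$ becomes equivalent to the purely combinatorial inequality
\[
\binom{c+c'}{c}\prod_{i=1}^{c}M_i(\F)\prod_{j=1}^{c'}M_j(\F')\ \leq\ \prod_{k=1}^{c+c'}M_k(\F\otimes\F'),
\]
while the lower-bound conclusion is the reverse inequality with $m$'s in place of $M$'s. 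Everything then hinges on these combinatorial inequalities.

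To attack the upper-bound inequality I would index lattice paths $P$ from $(0,0)$ to $(c,c')$; writing $(i_k^{P},j_k^{P})$ for the position of $P$ after its $k$th step, the crucial bound $M_k(\F\otimes\F')\geq M_{i_k^{P}}(\F)+M_{j_k^{P}}(\F')$ holds for every $P$ and every $k$, and the total number of such paths is precisely the binomial coefficient $\binom{c+c'}{c}$ demanded by the left-hand side. Expanding $\prod_{k}\bigl(M_{i_k^{P}}(\F)+M_{j_k^{P}}(\F')\bigr)$ over east/north choices at each step recovers the distinguished monomial $\prod M_i(\F)\prod M_j(\F')$ from the path-aligned term, and summing or weight-averaging over all paths is the natural device for producing the desired binomial factor.

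The main obstacle is exactly this combinatorial inequality: a single path, or the naive averaging over all $\binom{c+c'}{c}$ paths, recovers only coefficient $1$ on the right-hand side. To secure the full binomial coefficient I expect to have to exploit that $M_k(\F\otimes\F')\geq M_i(\F)+M_{k-i}(\F')$ holds simultaneously for many admissible $i$ at each fixed $k$, either by a carefully weighted AM--GM argument or by an induction that peels off a single shift at a time and uses the Pascal correction $\binom{c+c'}{c}=\binom{c+c'-1}{c-1}+\binom{c+c'-1}{c}$. The equality conditions, which should be identified in the same analysis, ought to coincide with the characterization of when $\F\otimes\F'$ is pure that is promised at the start of the section.
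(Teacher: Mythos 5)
Your reduction is correct and matches the paper's: multiplicities multiply, codimensions add, the tensored resolution has shifts $\hat M_k=\max_{i+j=k}(M_i+M'_j)$ and $\hat m_k=\min_{i+j=k}(m_i+m'_j)$, so everything comes down to the purely numerical inequalities $\prod_{k=1}^{c+c'}\hat M_k/(c+c')!\ \geq\ \bigl(\prod M_i/c!\bigr)\bigl(\prod M'_j/c'!\bigr)$ and its reverse for the $m$'s, exactly what the paper packages as $F(M\bowtie M')\geq F(M)F(M')$ and $F(m\star m')\leq F(m)F(m')$ in Lemma \ref{mSeq}. (You are also right to flag that the lower-bound part additionally uses that the tensor product of Cohen--Macaulay rings is Cohen--Macaulay, so that the codimension equals the length of the resolution on both sides.) Up to this point your argument is a correct account of the same approach.

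The gap is in the combinatorial inequality itself, and you correctly diagnose it. A single lattice path $P$ from $(0,0)$ to $(c,c')$ yields $\prod_k \hat M_k\geq\prod_k(M_{i_k^P}+M'_{j_k^P})\geq\prod_iM_i\prod_jM'_j$ with coefficient $1$, not $\binom{c+c'}{c}$; and taking the maximum or even the ``full'' path product does not salvage it (e.g.\ with $c=c'=1$, $M_1=1$, $M'_1=100$, the path EN gives $1\cdot101=101<\binom{2}{1}\cdot1\cdot100$). Summing over all $\binom{c+c'}{c}$ paths gives an average, not a product, so AM--GM does not directly supply the missing binomial factor either, and the Pascal-recursion you sketch is not carried out. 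The paper closes this gap with a normalization argument rather than a path-counting one: let $a=\min_i m_i/i$ wedge $\min_j m'_j/j$ and $b$ the next-smallest such ratio, replace every entry with ratio $a$ by the corresponding entry with ratio $b$, and observe that this multiplies $F(m)F(m')$ by exactly $(b/a)^{\#\text{replaced}}$ while multiplying $F(m\star m')$ by at least that much (because each index of the form $s_j$ or $s_j+s'_{j'}$ had $(m\star m')_r=ar$ before and has $(m\star m')_r\geq br$ after). Iterating makes both sequences arithmetic progressions $(a,2a,\ldots)$, at which point both sides equal $a^{c+c'}$ and the inequality is an equality. That extremal/perturbation step is the missing idea; without it, or some substitute (say a correct weighted AM--GM that actually produces $\binom{c+c'}{c}$), the proposal does not constitute a proof.
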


If $\F$ and $\F'$ are the minimal free resolutions of $S/I$ and $S'/I'$ respectively, $\F \otimes \F'$ is the minimal free resolution of $S/I \otimes S'/I'$.  Similarly, if $I$ and $I'$ are monomial ideals and $\F$ and $\F'$ are the Taylor resolutions of $S/I$ and $S'/I'$, then $\F \otimes \F'$ is the Taylor resolution of $(S/I) \otimes (S'/I')$.  Hence we obtain the following corollary.

\begin{corollary}
\label{MinTay}
If $S/I$ and $S'/I'$ are rings that satisfy the multiplicity conjecture, then $S/I \otimes S'/I'$ satisfies the multiplicity conjecture.  If $I$ and $I'$ are monomial ideals such that $S/I$ and $S'/I'$ that satisfy the Taylor conjecture, then $S/I \otimes S'/I'$ satisfies the Taylor conjecture.
\end{corollary}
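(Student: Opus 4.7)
The plan is to reduce the theorem to a purely numerical binomial inequality on the shift sequences and prove that inequality by induction on the total codimension.

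I would first set up the reduction. Writing $R = S/I$ and $R' = S'/I'$, since $\field$ is a field the complex $\F \otimes_\field \F'$ is a free resolution of $R \otimes_\field R'$ over $S \otimes_\field S'$ of codimension $c+c'$; multiplicity is multiplicative, $e(R \otimes R') = e(R)\,e(R')$; and depth is additive under $\otimes_\field$, so $R \otimes R'$ inherits the Cohen--Macaulay property needed for the lower bound. The K\"unneth-type convolution
\[
\beta_{k,n}(\F \otimes \F') = \sum_{\substack{i+i'=k \\ n_1+n_2=n}} \beta_{i,n_1}(\F)\,\beta_{i',n_2}(\F'),
\]
together with the convention $M_0 = m_0 = 0$ (from $\beta_{0,0} = 1$), yields
\[
M_k(\F \otimes \F') = \max_{i+j=k}\bigl(M_i(\F) + M_j(\F')\bigr), \qquad m_k(\F \otimes \F') = \min_{i+j=k}\bigl(m_i(\F) + m_j(\F')\bigr),
\]
with $0 \leq i \leq c$ and $0 \leq j \leq c'$. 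Multiplying the two assumed multiplicity bounds, the upper-bound half of the $(\F \otimes \F')$-multiplicity conjecture for $R \otimes R'$ is equivalent to the numerical inequality
\begin{equation}\label{NumIneq}
\binom{c+c'}{c}\prod_{i=1}^{c} M_i(\F)\prod_{j=1}^{c'} M_j(\F') \;\leq\; \prod_{k=1}^{c+c'} M_k(\F \otimes \F'),
\end{equation}
and the lower-bound half is equivalent to (\ref{NumIneq}) with every $M$ replaced by $m$ and the inequality reversed.

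I would then prove (\ref{NumIneq}) by induction on $c+c'$. The base case $c=0$ (or $c'=0$) is immediate since $M_k(\F \otimes \F')$ reduces to $M_k(\F')$ (resp.\ $M_k(\F)$) and the binomial coefficient equals $1$. For the inductive step, the key observation is that $(c,c')$ is the unique pair with $i+j=c+c'$ in the allowed range, forcing $M_{c+c'}(\F \otimes \F') = M_c(\F) + M_{c'}(\F')$. Introduce the restricted maxima
\[
\tilde M_k := \max_{\substack{i+j=k \\ i \leq c-1}}\!\bigl(M_i(\F) + M_j(\F')\bigr), \qquad \hat M_k := \max_{\substack{i+j=k \\ j \leq c'-1}}\!\bigl(M_i(\F) + M_j(\F')\bigr),
\]
each of which is bounded above by $M_k(\F \otimes \F')$. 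Because (\ref{NumIneq}) is a purely numerical statement about two sequences of positive reals, the inductive hypothesis applied to the truncated sequences $(M_1(\F),\ldots,M_{c-1}(\F))$ and $(M_1(\F'),\ldots,M_{c'}(\F'))$ gives
\[
\binom{c+c'-1}{c-1}\prod_{i=1}^{c-1} M_i(\F)\prod_{j=1}^{c'} M_j(\F') \;\leq\; \prod_{k=1}^{c+c'-1} \tilde M_k,
\]
and symmetrically one obtains the analogous bound for $\prod_k \hat M_k$ with binomial coefficient $\binom{c+c'-1}{c}$. Splitting the top factor as $M_c(\F) + M_{c'}(\F')$, bounding the first summand of the resulting expression via $\tilde M_k$ and the second via $\hat M_k$, and invoking Pascal's identity $\binom{c+c'-1}{c-1} + \binom{c+c'-1}{c} = \binom{c+c'}{c}$ will close the induction. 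The lower-bound inequality is proven by the entirely dual argument.

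The main obstacle is that a one-sided induction does not close: bounding $\prod_{k=1}^{c+c'-1} M_k(\F \otimes \F')$ only via $\prod_k \tilde M_k$ leaves a residue involving the uncontrolled ratio $M_c(\F)/M_{c'}(\F')$, and similarly for $\hat M_k$. The two-sided splitting of the top shift into $M_c(\F) + M_{c'}(\F')$ together with Pascal's identity are precisely what reassemble the two truncated inductive bounds into the correct binomial coefficient $\binom{c+c'}{c}$ on the left of (\ref{NumIneq}).
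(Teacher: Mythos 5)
Your argument is correct, but it takes a genuinely different route from the paper's. The paper establishes Corollary~\ref{MinTay} through Theorem~\ref{TenRed}, which in turn rests on Lemma~\ref{mSeq}: there, the key inequality $F(M \bowtie M') \geq F(M)F(M')$ is proven by a normalization argument that repeatedly locates the smallest ratio $M_i/i$ or $M'_i/i$, replaces all entries realizing that minimum by the next-smallest ratio, and tracks how both sides scale; the process terminates when both sequences become arithmetic progressions $(a,2a,\ldots)$, where equality is manifest. Your proof instead recasts the target as the binomial inequality $\binom{c+c'}{c}\prod M_i\prod M'_j \leq \prod_k M_k(\F\otimes\F')$ and closes an induction on $c+c'$ by splitting $M_{c+c'}(\F\otimes\F') = M_c + M'_{c'}$, applying the inductive hypothesis to the two truncated sequences (captured by your $\tilde M_k$ and $\hat M_k$, each dominated componentwise by $M_k(\F\otimes\F')$), and invoking Pascal's identity to reassemble $\binom{c+c'}{c}$. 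Both arguments are sound; the paper's replacement argument has the advantage of immediately yielding the rigidity statements of Theorems~\ref{TenEqLB} and~\ref{TenEqUB} (equality forces both sequences to be arithmetic with the same common difference), since it exhibits the slack at each replacement step, whereas your inductive argument is arguably more elementary and self-contained but leaves the equality analysis as a separate task. One small imprecision to note: when you restrict to the first $c$ and $c'$ shifts, the convolution formulas for $M_k$ and $m_k$ of $\F\otimes\F'$ are only one-sided inequalities (as in the paper's Corollary~3.4), not equalities; this is harmless because the inequalities point the right way, but the cleanest framing is to prove the binomial inequality for the sequence $N_k := \max_{i+j=k,\,i\leq c,\,j\leq c'}(M_i+M'_j)$, where $N_{c+c'}=M_c+M'_{c'}$ holds exactly, and then use $F(N)\leq F(M(\F\otimes\F'))$.
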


We remark that \cite[Theorem 1.1]{HerzogZheng} is an immediate consequence of Corollary \ref{MinTay}.

Our proof of Theorem \ref{TenRed} uses elementary operations on the sequences of minimal and maximal shifts associated with a resolution.  We will need a few lemmas to reduce to those operations.

Observe that if $(S/I,\F)$ and $(S'/I',\F')$ are graded rings with free resolutions, then $\F\otimes \F'$ is a free resolution of $(S/I)\otimes (S'/I')$, and we have $$\beta_{rs}(\F \otimes \F') = \sum_{i+j=r} \sum_{a+b=s}(\beta_{ia}(\F)+\beta_{jb}(\F')) \quad {\mbox{\upshape and}\,} \quad e(S/I \otimes S'/I') = e(S/I)e(S'/I').$$  Hence we obtain the following result.

\begin{lemma}
Let $m_i$, $m_i'$, and $\hat{m}_i$ be the minimal shifts of $\F$, $\F'$, and $\F \otimes \F'$ respectively, and let $M_i$, $M_i'$, and $\hat{M}_i$ be the maximal shifts.  Also set $m_0 = M_0 = 0$.  Then 
\begin{eqnarray*}
\hat{m}_r & = & \min \{ m_i+m'_j \ : \ i+j=r, \ i,j\geq 0\}  \quad \mbox{and} \\
\hat{M}_r & = & \max \{ M_i+M'_j \ : \ i+j=r, \ i,j\geq 0\}  
\end{eqnarray*}
\end{lemma}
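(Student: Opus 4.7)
The plan is to read off both equalities directly from the tensor-product Betti number formula displayed just above the lemma, using the fact that a summand $\beta_{i,a}(\F)\beta_{j,b}(\F')$ is nonzero precisely when both factors are. I would handle the two equalities in parallel, since one asks for the minimum and the other for the maximum of the support of $\beta_{r,\cdot}(\F\otimes\F')$ in its second index.

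First I would unwind definitions: $\hat m_r$ is the smallest $s$ with $\beta_{r,s}(\F\otimes\F')\neq 0$. By the tensor formula, $\beta_{r,s}(\F\otimes\F')\neq 0$ iff there exist indices $i+j=r$ and $a+b=s$ with $i,j\geq 0$ such that $\beta_{i,a}(\F)\neq 0$ and $\beta_{j,b}(\F')\neq 0$. For the $\leq$ direction, given any decomposition $r=i+j$, taking $a=m_i$ and $b=m'_j$ produces a nonzero contribution, so $\hat m_r\leq m_i+m'_j$; minimizing over $i+j=r$ yields $\hat m_r\leq \min_{i+j=r}(m_i+m'_j)$. For $\geq$, any witness $s=a+b$ satisfies $a\geq m_i$ and $b\geq m'_j$ by definition of the minimal shifts, hence $s\geq m_i+m'_j\geq \min_{i+j=r}(m_i+m'_j)$. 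The argument for $\hat M_r$ is the exact mirror, replacing $\min$ with $\max$ and $m_i,m'_j$ with $M_i,M'_j$.

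The convention $m_0=M_0=0$ is exactly what is needed to include the boundary cases $i=0$ or $j=0$ in the minimum/maximum: since $F_0=S$ and $F'_0=S'$ give $\beta_{0,0}(\F)=\beta_{0,0}(\F')=1$, those terms produce summands $m'_r$ and $m_r$ (respectively $M'_r$ and $M_r$) that must be competed against.

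There is no real obstacle; the proof is essentially bookkeeping. The only subtlety worth flagging is that the formula quoted before the lemma uses a $+$ inside the double sum, whereas the Betti numbers of a tensor product of complexes in fact \emph{multiply}, via $F_i\otimes F'_j=\bigoplus_{a,b}(S\otimes S')(-a-b)^{\beta_{i,a}(\F)\beta_{j,b}(\F')}$. Either way, what the proof actually uses is the nonvanishing criterion ``$\beta_{i,a}(\F)\neq 0$ and $\beta_{j,b}(\F')\neq 0$,'' and this is unaffected by the discrepancy.
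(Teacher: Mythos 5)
Your proof is correct and follows the same route as the paper, which treats the lemma as an immediate consequence of the displayed tensor Betti number formula and gives no separate argument. You are also right that the displayed formula should read $\beta_{ia}(\F)\,\beta_{jb}(\F')$ (a product) rather than a sum; this is a typo in the paper, and as you observe it does not affect the nonvanishing criterion on which both your argument and the paper's implicit one rest.
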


Since the multiplicity conjecture only uses the first $c$ terms in a free resolution of a ring $S/I$ of codimension $c$, we also consider,

\begin{corollary}
Let $m_i$, $m_i'$, and $\hat{m}_i$ be the first $c$, $c'$, and $c+c'$ minimal shifts of $\F$, $\F'$, and $\F \otimes \F'$ respectively, and let $M_i$, $M_i'$, and $\hat{M}_i$ be the maximal shifts.  Also set $m_0 = M_0 = 0$.  Then 
\begin{eqnarray*}
\hat{m}_r & \leq & \min \{ m_i+m'_j \ : \ i+j=r, \ i,j\geq 0\}  \quad \mbox{and} \\
\hat{M}_r & \geq & \max \{ M_i+M'_j \ : \ i+j=r, \ i,j\geq 0\}  
\end{eqnarray*}
\end{corollary}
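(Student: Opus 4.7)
The plan is to derive this corollary directly from the preceding lemma via a simple monotonicity argument on minima and maxima taken over nested index sets. No new algebraic input is required; the corollary merely repackages the lemma in the form of one-sided inequalities suited for the multiplicity bound, which only sees the first $c$ (respectively $c'$) shifts.

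First I would apply the preceding lemma in its unrestricted form to obtain the equalities
$$\hat{m}_r = \min\{m_i + m'_j : i+j = r,\ i,j \geq 0\} \quad\text{and}\quad \hat{M}_r = \max\{M_i + M'_j : i+j = r,\ i,j \geq 0\},$$
where on the right the indices range over all pairs for which the corresponding shifts of $\F$ and $\F'$ are defined, with the convention $m_0 = M_0 = m'_0 = M'_0 = 0$. For any $r$ in the range $1 \leq r \leq c + c'$, the values $\hat{m}_r$ and $\hat{M}_r$ on the left coincide with their truncated counterparts appearing in the corollary, since both refer to the same shifts of $\F \otimes \F'$. The only change when passing to the corollary is that the right-hand sides are restricted to pairs $(i,j)$ with $0 \leq i \leq c$ and $0 \leq j \leq c'$, this being the range on which the truncated sequences $m_i$ and $m'_j$ are defined.

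Since minimizing over a subset can only increase the result and maximizing over a subset can only decrease it, this restriction converts the equalities above into precisely the inequalities stated in the corollary. There is no genuine obstacle; the one point worth checking is that the restricted index set is nonempty for each $1 \leq r \leq c+c'$, which is immediate by taking $i = \min(r, c)$ and $j = r - i$, noting that $j \leq c'$ follows from $r \leq c+c'$.
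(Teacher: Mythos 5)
Your argument is correct and matches what the paper intends: the corollary follows from the lemma simply by restricting the index set on the right-hand side to $0 \le i \le c$, $0 \le j \le c'$, which can only increase a minimum and decrease a maximum. The paper does not spell out a proof for this corollary at all, so your filling in the (routine) monotonicity argument and the nonemptiness check is exactly the omitted content.
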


We will now define a \textit{lower join} operator $\star$ on sequences of positive real numbers in the following way.  Let $m = \{m_1,\ldots,m_k\}$ and $m' = \{m'_1,\ldots,m'_{k'}\}$.  Then $m \star m'$ is a sequence of positive real numbers of length $k+k'$ such that $$(m \star m')_r = \min\{m_i+m_j': i+j = r, i,j \geq 0\},$$ again with $m_0 = m_0' = 0$.  Similarly, define an \textit{upper join} operator $\bowtie$ so that if $M$ and $M'$ are sequences of positive real numbers of lengths $k$ and $k'$ respectively, and $M_0 = M'_0 = 0$, then $M \bowtie M'$ is a sequence of positive real numbers of length $k+k'$ with $$(M \bowtie M')_r = \max\{ M_i+M_j': i+j = r, i,j \geq 0\}.$$ Finally, define a function $F$ on sequences of positive real numbers by $$F\{m_1,\ldots,m_k\} := \frac{m_1\ldots m_k}{k!}.$$

Let $m$ and $m'$ be the first $c$ and $c'$ minimal shifts of $\F$ and $\F'$.  Since $m \star m'$ is the minimal shift sequence of $\F \otimes \F'$, and $S/I$ and $S'/I'$ satisfy the $\F$- and $\F'$-multiplicity lower bound conjectures, we can prove the $(\F \otimes \F')$-multiplicity lower bound conjecture on $(S/I) \otimes (S'/I')$ by proving that $F(m \star m') \leq F(m)F(m')$.  Similarly, we will prove the $(\F \otimes \F')$-multiplicity upper bound conjecture on $(S/I) \otimes (S'/I')$ by showing that if $M$ and $M'$ are the first $c$ and $c'$ maximal shifts of $\F$ and $\F'$, then $F(M \bowtie M') \geq F(M)F(M')$.

\begin{lemma}
\label{mSeq}
For sequences of positive real numbers $m$ and $m'$ of lengths $c$ and $c'$, $F(m \star m') \leq F(m)F(m')$.  Also, for sequences of positive real numbers $M$ and $M'$, $F(M \bowtie M') \geq F(M)F(M')$.
\end{lemma}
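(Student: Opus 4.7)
The plan is to prove both inequalities by induction on $c + c'$, using a peel-off strategy applied symmetrically to the two input sequences.

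For the base case $c = 0$ or $c' = 0$, the relevant join reduces to the nonempty sequence and $F$ of the empty sequence is $1$, so both inequalities hold with equality. For the inductive step on the lower bound, assume $c, c' \ge 1$. Note that $(m \star m')_{c+c'} = m_c + m'_{c'}$, since $(c, c')$ is the only pair $(i,j)$ with $i+j = c+c'$, $i \le c$, $j \le c'$. Write $(m')^- := (m'_1, \ldots, m'_{c'-1})$; for every $r \le c + c' - 1$, the minimum defining $(m \star (m')^-)_r$ ranges over a subset of the pairs considered for $(m \star m')_r$, so $(m \star m')_r \le (m \star (m')^-)_r$.

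Applying the inductive hypothesis to $(m, (m')^-)$, I get
\[
\prod_{r=1}^{c+c'} (m \star m')_r \le (m_c + m'_{c'}) \binom{c+c'-1}{c} \prod_{i=1}^c m_i \prod_{j=1}^{c'-1} m'_j = \frac{m_c + m'_{c'}}{m'_{c'}} \binom{c+c'-1}{c} \prod_i m_i \prod_j m'_j.
\]
Using the identity $\binom{c+c'}{c}/\binom{c+c'-1}{c} = (c+c')/c'$, this bound is at most the target $\binom{c+c'}{c} \prod_i m_i \prod_j m'_j$ precisely when $(m_c + m'_{c'})/m'_{c'} \le (c+c')/c'$, i.e., when $c' m_c \le c m'_{c'}$. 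Peeling $m_c$ instead yields, by the identical computation, the symmetric sufficient condition $c m'_{c'} \le c' m_c$. Since these two conditions are complementary, one of the two peel-offs always closes the induction.

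The upper-bound statement $F(M \bowtie M') \ge F(M)F(M')$ is proven by the same argument with all inequalities reversed: $(M \bowtie M')_r \ge (M \bowtie (M')^-)_r$ because the max on the right is taken over a subset, and the two peel-offs succeed under the complementary conditions $c' M_c \ge c M'_{c'}$ and $c M'_{c'} \ge c' M_c$. The main thing to get right is recognizing that the two peel-off strategies cover complementary cases so the induction closes cleanly; beyond this, the argument reduces entirely to the binomial ratio identity above.
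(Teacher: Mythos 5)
Your proof is correct, and it takes a genuinely different route from the paper's. The paper's argument is a smoothing/replacement scheme: one picks $a = \min_i m_i/i \wedge \min_j m'_j/j$, raises every entry achieving this minimal ratio up to the next ratio value $b$, checks that $F(m)F(m')$ scales by exactly $(b/a)^{j+j'}$ while $F(m\star m')$ scales by at least that much, and iterates until $m=(a,2a,\ldots,ca)$, $m'=(a,2a,\ldots,c'a)$, where equality is manifest. Your argument instead runs a clean induction on $c+c'$, peeling off the final entry of whichever sequence has the larger normalized last term $m_c/c$ versus $m'_{c'}/c'$, and closing the induction with the identity $\binom{c+c'}{c}/\binom{c+c'-1}{c}=(c+c')/c'$. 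Both are valid; the key step you needed to get right — that the two peel-off conditions $c'm_c \le c\,m'_{c'}$ and $c\,m'_{c'}\le c'm_c$ are complementary so one of them always lets the induction close — you did get right, and the subset argument $(m\star m')_r \le (m\star (m')^-)_r$ for $r<c+c'$ is also sound. The paper's smoothing approach has the advantage that it is reused almost verbatim to characterize the equality cases in Theorem~\ref{TenEqLB} (equality forces $m_i=ai$ for all $i$); your inductive argument can also yield equality analysis (equality forces $c'm_c=c\,m'_{c'}$ and equality at every level) but it would take a little more bookkeeping to recover the full characterization.
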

\proof We will prove the first statement.  The proof of the second statement is analogous and will be omitted.

Choose $a$ to be the minimum of all ${m_i}/{i}$ and ${m'_i}/{i}$, and let $$W = \{m_i: m_i = ai\} \cup \{m'_i: m'_i = ai\} = \{m_{s_1},\ldots,m_{s_j},m'_{s'_1},\ldots,m'_{s'_{j'}}\}.$$ Let $b$ be the second minimum value of ${m_i}/{i}$ and ${m'i}/{i}$ if such a $b$ exists.

Suppose $b$ exists.  Then $(m \star m')_i = ai$ for the following $j+j'$ distinct indices $i$: $s_1,\ldots,s_j,s_j+s'_1,\ldots,s_j+s'_{j'}$, and perhaps some others.  Hence, if we replace each $ai = m_i \in W$ by $bi$ and each $ai = m'_i \in W$ by $bi$, $F(m \star m')$ will increase by a factor of at least $({b}/{a})^{j+j'}$ and $F(m)F(m')$ will increase by a factor of exactly $({b}/{a})^{j+j'}$.  Hence we may make this substitution without loss of generality.

Repeat the above process until ${m_i}/{i} = {m'_j}/{j}$ for all $1 \leq i \leq c$ and $1 \leq j \leq c'$.  Then $m$ is of the form $(a,2a,\ldots,ca)$ and $m'$ is of the form $(a,2a,\ldots,c'a)$.  It follows that $m \star m' = (a,2a,\ldots,(c+c')a)$ and the desired inequality holds.
\endproof
\newline \newline
The proof of the above lemma not only implies Theorem \ref{TenRed}, but also gives very restrictive conditions under which equality can be attained.

\begin{theorem}
\label{TenEqLB}
Let $S/I$ and $S'/I'$ be Cohen-Macaulay rings that satisfy the $\F$- and $\F'$-multiplicity lower bound conjectures.  Then $(S/I) \otimes (S'/I')$ satisfies the $(\F \otimes \F')$-multiplicity lower bound conjecture with equality if and only if the following conditions hold: \newline
1) both $S/I$ and $S'/I'$ attain $\F$- and $\F'$-multiplicity lower bounds, and \newline
2) there exists a positive integer $a$ such that $m_i = ai$ for all $1 \leq i \leq c$ and $m'_i = ai$ for $1 \leq i \leq c'$.
\end{theorem}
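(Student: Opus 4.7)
The plan is to analyze when each link of the chain
\[
e\bigl((S/I)\otimes(S'/I')\bigr) \;=\; e(S/I)\,e(S'/I') \;\geq\; F(m)F(m') \;\geq\; F(m \star m') \;=\; F(\hat m)
\]
is an equality. The first inequality uses the hypothesized lower bounds on the individual factors; the second is Lemma~\ref{mSeq}; the last equality identifies the first $c+c'$ minimal shifts of $\F \otimes \F'$ with $m \star m'$. Thus attaining the tensor product lower bound is equivalent to both inequalities in the middle collapsing.

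For the sufficiency of (1) and (2), condition~(2) gives $m \star m' = (a, 2a, \ldots, (c+c')a)$, so $F(m \star m') = a^{c+c'} = F(m)F(m')$, and condition~(1) identifies $F(m)F(m')$ with $e(S/I)\,e(S'/I')$.

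For necessity, equality at the first link, combined with $e(S/I) \geq F(m)$ and $e(S'/I') \geq F(m')$, forces both individual bounds to be attained, which is~(1). It remains to show that the second equality $F(m) F(m') = F(m \star m')$ forces~(2), and here I would refine the proof of Lemma~\ref{mSeq}. At each substitution step the right-hand side multiplies by exactly $(b/a)^{j+j'}$. On the left-hand side, because $m_i/i > a$ forces $m_i/i \geq b$, after substitution $\tilde m_i \geq bi$ for every $i$; consequently $(\tilde m \star \tilde m')_r = br$ for every $r \in A := \{r : (m \star m')_r = ar\}$. Thus $F(m \star m')$ multiplies by at least $(b/a)^{|A|} \geq (b/a)^{j+j'}$, using $|A| \geq j+j'$.

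Combining this with Lemma~\ref{mSeq} applied to the substituted pair shows that the equality $F(m \star m') = F(m) F(m')$ is propagated by each reduction step, so the left-hand factor must equal $(b/a)^{j+j'}$ exactly at every step. This forces simultaneously $|A| = j + j'$ and $(\tilde m \star \tilde m')_r = (m \star m')_r$ for every $r \notin A$. The main obstacle, and the technical core, is the combinatorial claim that these tightness conditions cannot both hold whenever $j < c$ or $j' < c'$: if some index $k \in [c] \setminus \{s_1, \ldots, s_j\}$ is present, I would exhibit an index $r \notin A$ whose every minimal decomposition uses a substituted entry, producing a strict ratio above $1$ at $r$ and breaking tightness. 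Once this is established, no reduction can be performed at the outset, so $m$ and $m'$ are already arithmetic progressions with common increment $a$, which is condition~(2).
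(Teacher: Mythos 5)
Your setup is essentially the same as the paper's: you write the equality in question as two inequalities, $e(S/I)e(S'/I') \geq F(m)F(m')$ and $F(m)F(m') \geq F(m\star m')$, get condition~(1) from the first, and for condition~(2) you carry the equality $F(m\star m')=F(m)F(m')$ through the substitution process of Lemma~\ref{mSeq} until every $m_i/i$ and $m'_i/i$ lies in a two-element set $\{a,b\}$ with $a<b$. Your deduction that, in this state, tightness forces $|A|=j+j'$ together with $(m\star m')_r = br$ for all $r\notin A$ is correct, and matches what the paper asserts (``exactly $j+j'$ [entries] must be of the form $ai$ and the rest must be of the form $bi$'').

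However, there is a genuine gap at exactly the point you flag as ``the main obstacle, and the technical core.'' You write ``I would exhibit an index $r\notin A$ whose every minimal decomposition uses a substituted entry, producing a strict ratio above $1$ at $r$ and breaking tightness,'' but no such index is exhibited, and it is not obvious which one to take --- this is the entire remaining content of the proof. The paper finishes by a concrete three-way case analysis on the position, relative to $c$ and $c'$, of an index $r$ with $(m\star m')_r = br$ (which, since not all values equal $a$, must exist). In Case~1 ($r\leq c$) one takes the maximal initial segments on which $m_i=bi$ and $m'_i=bi$, of lengths $t$ and $t'$, and finds $a(t+t'+1)<\hat m_{t+t'+1}<b(t+t'+1)$; in Case~2 ($r\geq c'$) one takes minimal final segments and similarly forces $a(t+t'-1)<\hat m_{t+t'-1}<b(t+t'-1)$; in Case~3 ($c<r<c'$, so $m$ is all $b$-type) one takes the largest $i$ with $m'_i=ai$ and computes $\hat m_{i+c}=ai+bc$, strictly between $a(i+c)$ and $b(i+c)$. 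Each contradicts the fact just derived that every $(m\star m')_q$ equals $aq$ or $bq$. Without producing such an explicit witness (or an equivalent argument), the necessity of condition~(2) is not established, so the proposal as written is incomplete.
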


\proof Since $S/I$, $S'/I'$, and $(S/I) \otimes (S'/I')$ are all Cohen-Macaulay, $m$, $m'$, and $m \star m'$ are the full minimal shift sequences of $S/I$, $S'/I'$, and $(S/I) \otimes (S'/I')$.

Assume without loss of generality $c \leq c'$.  The necessity of the first condition is clear from the inequality $F(m \star m') \leq F(m)F(m')$.  We will show that if the second condition fails, then equality fails in Theorem \ref{mSeq}.  Suppose that not all values of ${m_i}/{i}$ and ${m'_i}/{i}$ are the same.  Also suppose that we have increased the lowest values of ${m_i}/{i}$ and ${m'_i}/{i}$, as in the proof of Lemma \ref{mSeq}, to the point where ${m_i}/{i}$ and ${m'_i}/{i}$ only take on two values: namely $a$ and $b$ with $a<b$.

Let $W$ be constructed as in the proof of Theorem \ref{mSeq}.  If $(S/I) \otimes (S'/I')$ attains the $\F \otimes \F'$-multiplicity lower bound, then when we replace $ai = m_i \in W$ and $ai = m'_i \in W$ each by $bi$,  $F(m \star m')$ must increase by a factor of \textit{exactly} $({b}/{a})^{j+j'}$.  Hence, of the entries in $(m \star m')_i$, exactly $j+j'$ must be of the form $ai$ and the rest must be of the form $bi$.  We want to show that either all the ${m_i}/{i}$ and ${m'_i}/{i}$ are $a$ or they are all $b$, which is equivalent to $j+j'=0$ or $j+j' = c+c'$.  Suppose then, by way of contradiction, that $0 < j+j' < c+c'$.

Assume $(m \star m')_r = bi$ for some $1 \leq r \leq c+c'$.  Then for all $0 \leq i \leq c$ and $0 \leq i' \leq c'$ with $i+i' = r$, we have $m_i = bi$ and $m'_{i'} = bi'$.  Hence one of the following conditions hold: \newline
1) If $r \leq c$, then for some $1 \leq t \leq c$, all $m_i = bi$ for $i \leq t$, and for some $1 \leq t' \leq c'$, $m'_i = bi$ for $i \leq t'$. \newline
2) If $r \geq c'$, then for some $1 \leq t \leq c$ and $1 \leq t' \leq c'$, $m_i = bi$ for $i \geq t$,  and $m'_i = bi$ for $i \geq t'$. \newline
3) If $c < r < c'$, then $m_i = bi$ for all $1 \leq i \leq c$.

In Case 1, assume that $t$ and $t'$ are chosen maximally.  Then $a(t+t'+1) < \hat{m}_{t+t'+1} < b(t+t'+1)$, a contradiction.  In Case 2, assume $t$ and $t'$ are chosen minimally, and $t > 1$ and $t' > 1$.  Then $a(t+t'-1) < \hat{m}_{t+t'-1} < b(t+t'-1)$, a contradiction.  If $t=1$ or $t'=1$, without loss of generality suppose $t=1$, and then Case 3 applies.  In Case 3, let $i$ be the largest index so $m'_i = ai$; such an $i$ exists by hypothesis.  Then $\hat{m}_{i+c} = ai + bc$, a contradiction.  Hence we conclude that $F(m \star m') = F(m)F(m')$ only if $m = (a,2a,\ldots,ca)$ and $m' = (a,2a,\ldots,c'a)$.

Conversely, if both conditions are satisfied, then the minimal shift sequence for $\F \otimes \F'$ is $(a,2a,\ldots,(c+c')a)$, and the result follows.
\endproof

\begin{theorem}
\label{TenEqUB}
Let $S/I$ and $S'/I'$ be Cohen-Macaulay rings that satisfy the $\F$- and $\F'$-multiplicity upper bound conjectures.  Then $(S/I) \otimes (S'/I')$ satisfies the $(\F \otimes \F')$-multiplicity upper bound conjecture with equality if and only if the following conditions hold: \newline
1) both $S/I$ and $S'/I'$ attain the $\F$- and $\F'$-multiplicity upper bound, and \newline
2) there exists a positive integer $a$ such that $M_i = ai$ for $1 \leq i \leq c$ and $M'_i = ai$ for all $1 \leq i \leq c'$.
\end{theorem}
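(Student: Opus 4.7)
The plan is to mirror the proof of Theorem \ref{TenEqLB} throughout, exchanging $\min$ for $\max$ and reversing every inequality. Because $S/I$, $S'/I'$, and $(S/I) \otimes (S'/I')$ are Cohen-Macaulay, the sequences $M$, $M'$, and $M \bowtie M'$ are the full maximal-shift sequences of the respective minimal free resolutions, and the $(\F \otimes \F')$-multiplicity upper bound for $(S/I) \otimes (S'/I')$ is exactly $F(M \bowtie M')$.

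For sufficiency, if $M = (a, 2a, \ldots, ca)$ and $M' = (a, 2a, \ldots, c'a)$, a direct computation gives $M \bowtie M' = (a, 2a, \ldots, (c+c')a)$, so $F(M \bowtie M') = a^{c+c'} = F(M) F(M')$; Condition 1 then yields $e((S/I) \otimes (S'/I')) = e(S/I) e(S'/I') = F(M \bowtie M')$. The necessity of Condition 1 is immediate from the chain $e((S/I) \otimes (S'/I')) = e(S/I) e(S'/I') \leq F(M) F(M') \leq F(M \bowtie M')$, since equality at the ends forces equality in each step.

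For the necessity of Condition 2, I would run the dual normalization of Lemma \ref{mSeq}: if the ratios $M_i/i$ and $M'_i/i$ do not all coincide, iteratively replace the \emph{largest} such ratio with the second largest (the dual of raising the smallest to the second smallest). After finitely many substitutions one arrives at a two-value situation with ratios $a > b$, and with ratio-$a$ set $W = \{M_{s_1},\ldots,M_{s_j}, M'_{s'_1},\ldots,M'_{s'_{j'}}\}$. The substitution sending each $ai \in W$ to $bi$ multiplies $F(M \bowtie M')$ by $(b/a)^\ell$ for some $\ell \geq j + j'$, namely the number of indices with $(M \bowtie M')_r = ar$, while multiplying $F(M) F(M')$ by exactly $(b/a)^{j+j'}$. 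Preservation of tensor equality through the substitution therefore forces $\ell = j + j'$, which under the two-value hypothesis pins down the structure of $M \bowtie M'$ completely: exactly $j + j'$ entries have ratio $a$ and the remainder have ratio $b$.

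Under the assumption $0 < j + j' < c + c'$, a contradiction is obtained from the three-case index chase dual to Theorem \ref{TenEqLB}. In the regime $r \leq c$, extracting a maximal prefix on which $M_i = bi$ and $M'_i = bi$ produces $b(t + t' + 1) < \hat M_{t + t' + 1} < a(t + t' + 1)$; in the regime $r \geq c'$, a minimal suffix of the same kind yields the analogous strict gap at index $t + t' - 1$; and in the intermediate regime $c < r < c'$, taking the largest index $i$ with $M'_i = ai$ and combining it with $M_c = bc$ gives $\hat M_{i+c} = ai + bc$, which lies strictly between $b(i+c)$ and $a(i+c)$. Each of these values contradicts the forced two-ratio structure of $M \bowtie M'$, so iteration drives all $M_i/i$ and $M'_i/i$ to a single common value $a$, which is precisely Condition 2. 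The principal obstacle, as in Theorem \ref{TenEqLB}, is the three-case index bookkeeping; by the formal symmetry of the two arguments no genuinely new ideas are required.
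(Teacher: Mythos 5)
Your proposal is correct and follows exactly the approach the paper intends: the paper's own proof of Theorem~\ref{TenEqUB} simply says ``The proof is similar to that of Theorem \ref{TenEqLB} and is omitted,'' and what you have written out is precisely that dualization, with $\min$ replaced by $\max$, the $\star$ operator by $\bowtie$, all inequalities reversed, and the normalization lowering the largest ratio to the second largest instead of raising the smallest; the three-case index chase carries over verbatim with $a$ and $b$ swapping roles ($a>b$ now the maximal ratio), and your verification that $ai+bc$ sits strictly between $b(i+c)$ and $a(i+c)$ is the correct dual of the paper's Case~3 calculation. One small slip: the sequences $M$, $M'$, $M\bowtie M'$ are the maximal-shift sequences of the given resolutions $\F$, $\F'$, $\F\otimes\F'$ (not of the minimal free resolutions as you wrote), but this does not affect the argument.
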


\proof The proof is similar to that of Theorem \ref{TenEqLB} and is omitted.
\endproof

In the case that $I$ and $I'$ are monomial ideals, our next theorem provides even stronger conditions under which equality is attained.

\begin{corollary}
\label{CrossPolytope}
Let $I$ and $I'$ be nonzero monomial ideals of $S$ and $S'$ respectively, and suppose $(S/I) \otimes (S'/I')$ has a pure resolution.  Then $I$, $I'$, and $(I \otimes 1') \oplus (1 \otimes I')$ are all generated by monomials in the same degree, say $a$.  Moreover, for every two minimal generators $\m_1$ and $\m_2$ of $I \otimes 1' \oplus 1 \otimes I'$, $\GCD(\m_1,\m_2) = 1$.
\end{corollary}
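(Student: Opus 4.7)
The strategy is to exploit that, if $\F$ and $\F'$ are the minimal free resolutions of $S/I$ and $S'/I'$, then $\F\otimes\F'$ is the minimal free resolution of $(S/I)\otimes(S'/I')$, so purity translates directly into rigid constraints on the Betti numbers of the factors. Write $J=(I\otimes 1')\oplus(1\otimes I')$, so $S/J=(S/I)\otimes(S'/I')$.

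At position~$1$ one has $\hat\beta_{1,d}(S/J)=\beta_{1,d}(S/I)+\beta_{1,d}(S'/I')$. For $S/J$ to have a pure resolution, the nonzero values at position~$1$ must lie in a single degree, forcing $m_1(S/I)=M_1(S/I)=m_1(S'/I')=M_1(S'/I')=a$ for some common $a$. This gives the first assertion: every minimal generator of $I$, $I'$, and $J$ has degree $a$.

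For coprimality, two minimal generators of $J$ coming from different tensor factors use disjoint variable sets and are automatically coprime, so it remains to show that any pair $\m_1,\m_2\in\GEN(I)$ (and symmetrically in $\GEN(I')$) satisfies $\GCD(\m_1,\m_2)=1$. Suppose for contradiction that $\GCD(\m_1,\m_2)\neq 1$. The cross-syzygy contribution gives $\hat\beta_{2,2a}(S/J)\geq\beta_{1,a}(S/I)\beta_{1,a}(S'/I')\geq 1$, so purity pins the unique position-$2$ shift of $\F\otimes\F'$ at exactly $2a$. Consequently $\beta_{2,d}(S/I)=0$ for every $d\neq 2a$; I will contradict this by producing some $d<2a$ with $\beta_{2,d}(S/I)\neq 0$.

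The key tool is the Gasharov--Peeva--Welker formula $\beta_{i,\nu}(S/I)=\dim_\field \tilde H_{i-2}(\Delta(\hat 0,\nu);\field)$, where $\Delta(\hat 0,\nu)$ is the order complex of the open interval below $\nu$ in the LCM lattice $L(I)$. Choose $\nu\in L(I)$ of minimal total degree subject to $a<\deg\nu<2a$; such $\nu$ exists because $\LCM(\m_1,\m_2)$ lies in this range. Minimality of $\deg\nu$ forces every element of $(\hat 0,\nu)$ to have degree exactly $a$, hence to be a minimal generator of $I$ dividing $\nu$. There are at least two such generators (since $\deg\nu>a$ forces $\nu$ to be an LCM of at least two generators), and minimal generators of equal degree are pairwise incomparable under divisibility, so $\Delta(\hat 0,\nu)$ is a discrete set of $\geq 2$ points. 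Hence $\tilde H_0\neq 0$, yielding $\beta_{2,\nu}(S/I)>0$ at total degree $\deg\nu<2a$, the desired contradiction.

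The main obstacle is the reliance on the LCM-lattice formula; a more self-contained alternative would polarize $I$ to reduce to the squarefree case, apply Hochster's formula (\ref{M-interpr}), and establish disconnectedness of an appropriate Alexander dual via a combinatorial argument of the same flavor.
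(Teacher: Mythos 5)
Your proof is correct and takes a somewhat different route from the paper's. The paper derives the full pure shift sequence $(a,2a,\ldots,ka)$ by invoking Theorem~\ref{TenEqLB} and then, for coprimality, says only that ``from the first syzygy of $\m_1$ and $\m_2$, $\beta_{2,r}>0$'' for $r=\deg\LCM(\m_1,\m_2)$ --- a statement that is not literally true for an arbitrary pair (the Koszul syzygy need not be a minimal syzygy), and needs the tacit refinement that one takes the pair with minimal LCM degree, for which the Koszul relation cannot be expressed in terms of lower-degree syzygies. You sidestep both of these: you extract $m_1=M_1=a$ directly from the additive formula $\hat\beta_{1,d}=\beta_{1,d}(S/I)+\beta_{1,d}(S'/I')$ without invoking Theorem~\ref{TenEqLB}, pin $m_2=M_2=2a$ via the cross-syzygy term $\beta_{1,a}(S/I)\beta_{1,a}(S'/I')$, and then make the ``minimal LCM degree'' argument precise through the Gasharov--Peeva--Welker order-complex formula: choosing $\nu\in L(I)$ of least degree in $(a,2a)$ forces the open interval $(\hat 0,\nu)$ to consist of at least two pairwise-incomparable degree-$a$ atoms, so $\tilde H_0(\Delta(\hat 0,\nu))\neq 0$ and $\beta_{2,\deg\nu}(S/I)>0$, contradicting purity. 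The trade-off is that your argument imports a tool (the LCM-lattice homology formula) not used elsewhere in the paper, whereas the paper's argument is self-contained but glosses over the minimality point; as you note, one could recast your step via polarization and Hochster's formula~(\ref{M-interpr}) to stay within the paper's toolbox. Both approaches are valid; yours is the more careful.
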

\begin{proof}
If $(S/I) \otimes (S'/I')$ has a pure resolution, then the minimal free resolution $\F$ of $(S/I) \otimes (S'/I')$ is pure.  By Theorem \ref{TenEqLB}, if $\F$ has length $k$, then $m(\F) = M(\F) = (a,2a,\ldots,ka)$.  Since neither $I$ nor $I'$ have codimension $0$, $k \geq 2$.  In particular, $m_1(\F) = M_1(\F) = a$, which implies that all generators of $(I \otimes 1') \oplus (1 \otimes I')$ have degree $a$.  Observe that $\GEN((I \otimes 1') \oplus (1 \otimes I')) = \GEN(I) \coprod \GEN(I')$, so the generators of $I$ and $I'$ are also all of degree $a$.

Since the minimal free resolution of $(S/I) \otimes (S'/I')$ is pure, $\beta_{2,r}((S/I) \otimes (S/I')) = 0$ whenever $r \neq 2a$.  Consider minimal generators $\m_1$ and $\m_2$ of $(I \otimes 1') \oplus (1 \otimes I')$ so that the LCM of $\m_1$ and $\m_2$ has degree $r$.  Then from the first syzygy of $\m_1$ and $\m_2$, $\beta_{2,r}((S/I) \otimes (S'/I')) > 0$, hence $r = 2a$ and $\GCD(\m_1,\m_2) = 1$.
\end{proof}

Suppose $\K$ and $\K'$ are simplicial complexes.  Then all of the above results apply to $\K \star \K'$.  By applying Corollary \ref{CrossPolytope} to Stanley-Reisner ideals, we obtain the following result.

\begin{corollary}
\label{CrossPolytopeSF}
If $\K \star \K'$ has a pure resolution, then one of the following conditions holds:\newline
1) $\K$ is a simplex and $\K'$ has a pure resolution, or vice versa, or \newline
2) each of $\K$, $\K'$, and $\K \star \K'$ is the join of a simplex and several copies of the boundary of the simplex on $a$ vertices.
\end{corollary}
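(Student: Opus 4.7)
The plan is to apply Corollary \ref{CrossPolytope} to $I_\K$ and $I_{\K'}$ and translate its arithmetic conclusions into combinatorial structure. If $\K$ is itself a simplex then $I_\K = 0$ and the minimal non-faces of $\K \star \K'$ coincide with those of $\K'$, so $\K \star \K'$ has a pure resolution if and only if $\K'$ does, placing us in Case 1; the case where $\K'$ is a simplex is symmetric.

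Otherwise both $I_\K$ and $I_{\K'}$ are nonzero, so Corollary \ref{CrossPolytope} produces a positive integer $a$ such that every minimal generator of $I_\K$, $I_{\K'}$, and $I_{\K \star \K'}$ has degree $a$, and any two distinct minimal generators of $I_{\K \star \K'}$ are coprime. Translating through the Stanley-Reisner dictionary, every minimal non-face of $\K$ has cardinality $a$, and any two minimal non-faces of $\K$ are disjoint subsets of $V(\K)$; the same statement holds for $\K'$, while disjointness across the two complexes is automatic since their vertex sets are disjoint.

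Let $N_1,\ldots,N_s$ be the minimal non-faces of $\K$ and set $V_0 = V(\K) \setminus (N_1 \cup \cdots \cup N_s)$. A subset $F \subseteq V(\K)$ is a face of $\K$ if and only if it contains no $N_i$, which by pairwise disjointness and $|N_i|=a$ is equivalent to $|F \cap N_i| \leq a-1$ for every $i$. Writing $F = (F \cap V_0) \sqcup (F \cap N_1) \sqcup \cdots \sqcup (F \cap N_s)$ exhibits $\K$ as the simplicial join of the full simplex on $V_0$ with the boundaries $\partial \Delta_{N_i}$ of the $(a-1)$-simplex on each $N_i$, and the same description applies to $\K'$. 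By associativity of the join, $\K \star \K'$ is then the join of the simplex on $V_0 \sqcup V_0'$ with this full collection of $(a-1)$-simplex boundaries, yielding Case 2. The only work beyond invoking Corollary \ref{CrossPolytope} is this combinatorial decomposition, which is routine once uniform size $a$ and pairwise disjointness are in hand, so I do not anticipate a real obstacle.
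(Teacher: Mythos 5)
Your proof is correct and follows exactly the route the paper intends: the paper states this corollary as an immediate consequence of Corollary~\ref{CrossPolytope} applied to Stanley--Reisner ideals, without spelling out the details. Your separate handling of the degenerate case where $\K$ or $\K'$ is a simplex (needed because Corollary~\ref{CrossPolytope} assumes both ideals are nonzero) and your translation of ``all minimal generators have degree $a$ and are pairwise coprime'' into ``all minimal non-faces have size $a$ and are pairwise disjoint, hence $\K = \Delta_{V_0} \star \partial\Delta_{N_1} \star \cdots \star \partial\Delta_{N_s}$'' are both sound and complete the argument the paper leaves implicit.
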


We close this section with an interesting application of Theorem \ref{TenRed} to balanced simplicial complexes.  

\begin{theorem}
\label{UBBalanced}
Let $\K$ be an $(a_1,\ldots,a_k)$-balanced complex, where $a_i \leq 4$ for all $1 \leq i \leq k$.  Then $\K$ satisfies the Taylor upper bound conjecture.
\end{theorem}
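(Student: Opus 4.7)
The approach is to combine Corollary~\ref{MinTay} (the Taylor tensor product reduction) with a direct argument for the one-color case, using the hypothesis $a_i\le 4$ to reduce to simplicial complexes of dimension at most three.

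\textit{Step 1: the one-color case.} First I would prove the theorem when $k=1$, i.e.\ for every simplicial complex $\K$ of dimension $a-1$ with $a\le 4$. Here the codimension is $n-a$, and by~(\ref{TaylorM}) each $\tilde M_i$ is the size of the largest union of $i$ distinct minimal non-faces of $\K$. A case analysis on $a\in\{1,2,3,4\}$ should then yield
\[
f_{a-1}(\K)\;\le\;\frac{\tilde M_1\,\tilde M_2\cdots \tilde M_{n-a}}{(n-a)!}.
\]
In each dimension the restricted geometry of the minimal non-faces forces $\tilde M_i$ to grow fast enough in $i$ to absorb the count of top-dimensional faces; the delicate subcase is $a=4$, where three-dimensional complexes require the most bookkeeping.

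\textit{Step 2: reduction to Step~1.} For a general $(a_1,\dots,a_k)$-balanced complex $\K$ with color classes $V_1,\dots,V_k$, let $\Sigma_i$ denote the $(a_i-1)$-skeleton of the full simplex on $V_i$ and set $\Sigma=\Sigma_1\star\cdots\star\Sigma_k$. Since every face of $\K$ has at most $a_i$ vertices in $V_i$, we have $\K\subseteq\Sigma$. By Step~1, each $\Sigma_i$ satisfies the Taylor upper bound, and by the join-to-tensor correspondence from Section~\ref{Prelim} we have $S/I_\Sigma=(S_1/I_{\Sigma_1})\otimes_\field\cdots\otimes_\field(S_k/I_{\Sigma_k})$. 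Iterated application of Corollary~\ref{MinTay} then gives the Taylor upper bound for $\Sigma$.

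\textit{Step 3: transfer from $\Sigma$ to $\K$.} Since $\K\subseteq\Sigma$ we have $f_{d-1}(\K)\le f_{d-1}(\Sigma)$, so it suffices to show $\prod_{i=1}^{c}\tilde M_i(\K)\ge \prod_{i=1}^{c}\tilde M_i(\Sigma)$. The intended argument is by refinement: every minimal non-face of $\Sigma$ still lies in $I_\K$ and hence contains a minimal non-face of $\K$. Given $i$ generators of $I_\Sigma$ realizing $\tilde M_i(\Sigma)$, one replaces each by a contained minimal generator of $I_\K$ and, if necessary, pads the collection back to size $i$ using additional minimal generators of $I_\K$ that became available when faces of $\Sigma$ were removed from $\K$. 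The main obstacle is precisely this refinement: shrinking a generator could in principle reduce the union of supports, so one must exhibit enough extra generators of $I_\K$ (supplied by the deleted faces) to recover any lost degree. The balanced coloring, together with the smallness constraint $a_i\le 4$, should give the rigidity needed to control the bookkeeping and is expected to be the technical heart of the proof.
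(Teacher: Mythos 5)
Your overall strategy---decompose by color classes, handle the one-color case, and propagate via the tensor-product result---matches the paper's, but the key choice in Step~2 is off, and this makes Step~3 fail.

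The paper takes $\K_i := \K[V_i]$, the \emph{induced} subcomplex of $\K$ on the $i$-th color class, rather than the full $(a_i-1)$-skeleton $\Sigma_i$ of the simplex on $V_i$. Since $\K_i$ is an induced subcomplex of $\K$, its minimal non-faces are exactly the minimal non-faces of $\K$ contained in $V_i$; hence $\GEN(I_{\K_i}) \subset \GEN(I_\K)$, and therefore $\GEN(I_{\K'}) \subset \GEN(I_\K)$ where $\K' := \K_1 \star \cdots \star \K_k$. This containment of generating sets makes the transfer step immediate: the Taylor simplex for $\K'$ sits inside the one for $\K$, so $\tilde M_j(\K) \ge \tilde M_j(\K')$ termwise. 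Combined with $f_{d-1}(\K) \le f_{d-1}(\K')$ (as $\K\subseteq\K'$ and both have dimension $d-1$), the bound on $\K'$ transfers to $\K$ with no bookkeeping.

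Your $\Sigma_i$, by contrast, is the largest complex with the right skeletal constraint, so $\GEN(I_\Sigma)$ need not be a subset of $\GEN(I_\K)$, and the inequality $\prod_i \tilde M_i(\K) \ge \prod_i \tilde M_i(\Sigma)$ you propose in Step~3 is simply false. For a concrete counterexample take $k=1$, $a_1=2$, and $\K$ the $4$-cycle on vertices $\{1,2,3,4\}$ with edges $12,23,34,14$. Then $\Sigma$ is the $1$-skeleton of the $3$-simplex, $I_\Sigma$ is generated by the four degree-$3$ squarefree monomials, and $\tilde M(\Sigma)=(3,4)$, so $\prod\tilde M_i(\Sigma)=12$; while $I_\K=(x_1x_3,x_2x_4)$, $\tilde M(\K)=(2,4)$, and $\prod\tilde M_i(\K)=8<12$. (The Taylor bound still holds for $\K$, but your claimed inequality does not, and the ``refine and pad'' argument cannot rescue it since shrinking a generator can strictly shrink the achievable LCM.) Two smaller notes: your Step~1 is substantially harder than you let on and is where the paper offloads to a citation (\cite{MGmin} establishes the multiplicity upper bound in dimension $\le 3$, from which the Taylor bound follows); and the paper invokes Theorem~\ref{TenRed} rather than Corollary~\ref{MinTay}, which matters because $\K'$ is generally not a flag complex even when $\K$ is, so one must phrase the reduction at the level of arbitrary Taylor resolutions of the joinands.
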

\begin{proof}
For $1 \leq i \leq k$, let $V_i$ be the set of vertices of $\K$ colored $i$, and let $\K_i = \K[V_i]$.  Let $d-1$ be the dimension of $\K$.

$\K_i$ is a simplicial complex of dimension at most $3$.  It is shown in \cite{MGmin} that $\K_i$ satisfies the multiplicity upper bound conjecture, and hence $\K_i$ also satisfies the Taylor upper bound conjecture.  By Theorem \ref{TenRed}, $\K' := \star_{i=1}^b \K_i$ satisfies the Taylor upper bound conjecture.

Since $\K$ is a (non-induced) subcomplex of $\K'$, $f_{d-1}(\K) \leq f_{d-1}(\K')$.  Also, since $\K_i$ is an induced subcomplex of $\K$, $\GEN(I_{\K_i}) \subset \GEN(I_{\K})$ and hence $\GEN(I_{\K'}) \subset \GEN(I_{\K})$.  This implies $\tilde{M}(\K) \geq \tilde{M}(\K')$.  Hence $\K$ satisfies the Taylor upper bound conjecture.
\end{proof}

Equality in Theorem \ref{UBBalanced} is attained only when $\K = \K_1 \star \ldots \star  \K_k$ and the conditions of Corollary \ref{CrossPolytopeSF} apply.

\section{Unions of Simplicial Complexes}
\label{UnionSection}
In this section we consider some ways to express the multiplicity upper bound conjecture for a simplicial complex $\K$ in terms of the multiplicity upper bound conjecture for subcomplexes of $\K$.  This also provides our main inductive tool for the proof of Theorem \ref{FlagUB} below.

Throughout this section, we will use $U(\K,\F)$ or $U(\F)$ to refer to the upper bound on $e(S/I_\K) = f_{d-1}(\K)$ asserted by the $\F$-multiplicity conjecture.  If $I_\Gamma$ has codimension $c$, then $M(\F)$ is the sequence of the first $c$ maximal shifts of $S/I_\Gamma$.

The general principle used throughout this section is as follows.  Let $\K \cup \K'$ be a simplicial complex of dimension $d-1$.  If $\F$, $\F'$, and $\hat{\F}$ are free resolutions of $\K$, $\K'$, and $\K \cup \K'$ respectively, such that $U(\F) + U(\F') \leq U(\hat{F})$, and if $\K$ and $\K'$ satisfy the $\F$- and $\F'$-multiplicity upper bound conjectures, then $\K \cup \K'$ satisfies the $\hat{\F}$-multiplicity upper bound conjecture as well.  The reason is that $f_{d-1}(\K) + f_{d-1}(\K') \geq f_{d-1}(\K \cup \K')$.

More specifically, suppose $\K$ and $\K'$ are {\bf induced} subcomplexes of $\K \cup \K'$.  Also suppose $\F$, $\F'$, and $\hat{\F}$ are free resolutions of $\K$, $\K'$, and $\K \cup \K'$ respectively, so that when $M_i(\F)$ is defined, $M_i(\F) \leq M_i(\hat{\F})$ and when $M_i(\F')$ is defined, $M_i(\F') \leq M_i(\hat{F})$.  This condition is satisfied when $\F$, $\F'$, and $\hat{\F}$ are all minimal free resolutions or all Taylor resolutions.  If $\K \cup \K'$ has $\hat{n}$ vertices and dimension $d-1$, choose $t \geq 0$ so that $M_{\hat{n}-d}(\hat{\F}) = \hat{n}-d+t+1$.

One particularly important case is that of the minimal free resolution.  Say that a simplicial complex $\K$ is \textit{r-Leray} if for all $p \geq r$ and $W \subseteq V(\K)$, $\tilde{H}_p(\K[W]) = 0$.  Then $t$ is the maximum integer such that $\K \cup \K'$ is not $t$-Leray.  Equivalently, the Castelnuovo-Mumford regularity of $S/I_\K$ is $t+1$.

\begin{theorem}
\label{Union}
With the assumptions as above, if $f_0(\K \cap \K') \leq t+d-1$, then $\K \cup \K'$ satisfies the $\hat{\F}$-multiplicity upper bound conjecture.
\end{theorem}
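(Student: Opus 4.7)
The approach is to combine the combinatorial upper bound $f_{d-1}(\K \cup \K') \le f_{d-1}(\K) + f_{d-1}(\K')$ (valid because $\K, \K'$ are induced subcomplexes and every $(d-1)$-face of $\K\cup\K'$ lies in $\K$ or in $\K'$) with the $\F$- and $\F'$-multiplicity upper bounds on $\K$ and $\K'$. This reduces the theorem to the inequality $U(\F) + U(\F') \le U(\hat{\F})$, and after inserting the hypothesis $M_i(\F), M_i(\F') \le M_i(\hat{\F})$ and writing $a = \codim I_\K$, $b = \codim I_{\K'}$, $c = \hat{n}-d$, it suffices to establish the purely numerical inequality
\[
\frac{\prod_{i=1}^{a} M_i(\hat{\F})}{a!} + \frac{\prod_{i=1}^{b} M_i(\hat{\F})}{b!} \le \frac{\prod_{i=1}^{c} M_i(\hat{\F})}{c!}.
\]

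To prove this I would divide through by $\prod_{i=1}^c M_i(\hat{\F})/c!$ and estimate each tail product $\prod_{i=a+1}^c M_i(\hat{\F})$ from below. Two inputs suffice: (i) $M_i(\hat{\F}) \ge i+1$ for $1 \le i \le c$ --- which follows from Hochster's formula for the minimal resolution (since $\tilde{H}_{j-i-1}(\K[W])\neq 0$ forces $j \ge i+1$) and passes to the Taylor resolution via $\beta_{ij}(\T) \ge \beta_{ij}(\F_{\min})$ --- and (ii) $M_c(\hat{\F}) = c+t+1$, which is the definition of $t$. Together these give $\prod_{i=a+1}^c M_i(\hat{\F}) \ge \frac{c!}{(a+1)!}(c+t+1)$, and analogously for $b$, so the desired left-hand side is at most $\frac{a+1}{c+t+1} + \frac{b+1}{c+t+1} = \frac{a+b+2}{c+t+1}$.

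It remains to check that $a + b + 2 \le c + t + 1$. Since $\K$ and $\K'$ are induced subcomplexes and (in the main case) both have dimension $d-1$, the vertex-count identity $n_\K + n_{\K'} = \hat{n} + f_0(\K \cap \K')$ yields
\[
a + b = (n_\K - d) + (n_{\K'} - d) = c + f_0(\K \cap \K') - d,
\]
and the hypothesis $f_0(\K \cap \K') \le t + d - 1$ delivers exactly $a + b \le c + t - 1$.

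A few degenerate sub-cases must be dealt with separately but all flow from the same numerical estimate. If $V(\K) = V(\K \cup \K')$ then $\K = \K \cup \K'$ and the conclusion reduces to the monotonicity $U(\F) \le U(\hat{\F})$, which follows from the same shift-padding bound $M_i(\hat{\F}) \ge i+1$; likewise, if $\dim \K < d-1$ then $f_{d-1}(\K) = 0$ and only $\K'$ contributes, and we are done by the same monotonicity. I expect the main obstacle will be crisply justifying $M_i(\hat{\F}) \ge i+1$ for the Taylor case, since it relies on the comparison $\beta_{ij}(\T)\ge \beta_{ij}(\F_{\min})$ being valid uniformly for $1 \le i \le c$; once that is in hand, the remainder is direct calculation.
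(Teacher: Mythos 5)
Your proposal is correct and its skeleton is the same as the paper's: both proofs reduce to the purely combinatorial step of showing $U(\F)+U(\F')\le U(\hat{\F})$, and both derive this from exactly the two inputs you identify, namely $M_i(\hat{\F})\ge i+1$ for $i<c$ and $M_c(\hat{\F})=c+t+1$. Where you differ is in how the numerical inequality is discharged. The paper proves the stronger Proposition~\ref{UnionCor} (tracking the explicit additive slack $z=(t+d-1)-f_0(\K\cap\K')$) by an iterated replacement argument: it repeatedly pushes entries of the three shift sequences down to $i+1$, showing at each step that the quantity $F(N)-F(M(\F))-F(M(\F'))$ can only decrease multiplicatively, until the sequences are fully normalized and the defect can be read off. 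Your route divides through by $U(\hat{\F})$ and estimates the two resulting quotients by $\frac{a+1}{c+t+1}$ and $\frac{b+1}{c+t+1}$; this is shorter and more transparent, and the vertex-count identity closes it cleanly, but it only gives the qualitative Theorem~\ref{Union} and not the refined defect bound of the Proposition.

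Two small reassurances. First, your worry about uniformly justifying $M_i(\hat{\F})\ge i+1$ in the Taylor case is a non-issue: for any free resolution $\F$ of $S/I$ one has $\beta_{ij}(\F)\ge\beta_{ij}(\F_{\min})$ by minimality (the paper states this in the introduction), and since $\operatorname{pd}(S/I_\K)\ge\operatorname{codim}(I_\K)=c$ by Auslander--Buchsbaum, the minimal Betti numbers $\beta_{i,\bullet}(\F_{\min})$ are nonzero for all $1\le i\le c$, whence $M_i(\hat{\F})\ge M_i(\F_{\min})\ge i+1$. Second, your degenerate cases are handled correctly; just note that in the equal-dimension case the induced-subcomplex hypothesis guarantees $a\le c$ and $b\le c$ (since $n,n'\le\hat n$), which is needed for the tail products $\prod_{i=a+1}^c$ and $\prod_{i=b+1}^c$ to make sense with the estimate $\prod_{i=a+1}^c M_i(\hat{\F})\ge\frac{c!}{(a+1)!}(c+t+1)$; when $a=c$ you are already in the containment subcase $\K=\K\cup\K'$ which you treat separately via monotonicity.
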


In fact, we will prove the following stronger result.
\begin{proposition}
\label{UnionCor}
Assume $\K \not\subset \K'$ and $\K' \not\subset \K$.  Assume also that $\K$ has $n$ vertices and dimension $d-1$, and $\K'$ has $n'$ vertices and dimension $d'-1$.  If $d = d'$ and $f_0(\K \cap \K') \leq d+t-1$, then $$f_{d-1}(\K \cup \K') \leq U(\K \cup \K') + f_0(\K \cap \K') -(d+t-1),$$
while if $d' < d$, then $$f_{d-1}(\K \cup \K') \leq U(\K \cup \K') - n' + f_0(\K \cap \K'),$$
In particular, in Theorem \ref{Union}, $f_{d-1}(\K \cup \K') = U(\K \cup \K')$ only if $\K$ and $\K'$ have the same dimension $d-1$, $f_0(\K \cap \K')=d+t-1$, and $\K \cap \K'$ contains no faces of dimension $d-1$.
\end{proposition}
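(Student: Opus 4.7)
The plan is to separate the two cases $d=d'$ and $d'<d$ of the hypothesis and, in each case, combine three ingredients: a bound on $f_{d-1}(\K \cup \K')$ coming from inclusion--exclusion (Case~1) or from the dimension gap (Case~2), the hypothesized $\F$- and $\F'$-multiplicity upper bounds on $\K$ and $\K'$, and a purely algebraic comparison between $U(\K,\F)$, $U(\K',\F')$, and $U(\K \cup \K',\hat{\F})$. This last comparison is the workhorse and rests on two shift estimates for $\hat{\F}$.

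The first estimate is $M_i(\hat{\F}) \geq i+1$ for all $1 \leq i \leq \hat{c}$, which holds because $I_{\K \cup \K'}$ is a Stanley--Reisner ideal with no linear generators; hence even its minimal shifts satisfy $m_i \geq i+1$, and $M_i(\hat{\F}) \geq M_i(\F_{\min}) \geq i+1$. The second is the defining identity $M_{\hat{c}}(\hat{\F}) = \hat{c}+t+1$. Writing $A = \prod_{i=1}^c M_i(\hat{\F})/c!$, $B = \prod_{i=1}^{c'} M_i(\hat{\F})/c'!$, and $C = U(\K \cup \K',\hat{\F})$, the two estimates combine in a telescoping product to yield
\[
\frac{C}{A} \;=\; \frac{c!}{\hat{c}!}\prod_{i=c+1}^{\hat{c}} M_i(\hat{\F}) \;\geq\; \frac{\hat{c}+t+1}{c+1},
\]
and the analogue with $c'$ in place of $c$; note $\hat{c}>c$ and $\hat{c}>c'$ since $\K$ and $\K'$ are induced and neither contains the other, so the ratio is nontrivial. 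From $M_i(\F) \leq M_i(\hat{\F})$ and $M_i(\F') \leq M_i(\hat{\F})$ together with the multiplicity hypotheses, $f_{d-1}(\K) \leq U(\K,\F) \leq A$ and $f_{d'-1}(\K') \leq U(\K',\F') \leq B$; the same shift estimate also gives $A \geq c+1$, $B \geq c'+1$, and directly $C \geq \hat{c}+t+1$.

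For Case~2, the dimension gap forces $f_{d-1}(\K \cup \K') = f_{d-1}(\K) \leq A$, and $\hat{c}-c = n' - f_0(\K \cap \K')$. The ratio bound together with $A \geq c+1$ gives $C - A \geq A(\hat{c}-c+t)/(c+1) \geq \hat{c}-c+t \geq n' - f_0(\K \cap \K')$, proving the claim with a strict slack of at least $t$. For Case~1, inclusion--exclusion on $d$-subsets gives $f_{d-1}(\K \cup \K') \leq f_{d-1}(\K) + f_{d-1}(\K') \leq A + B$. Summing the two ratio bounds yields $A + B \leq C(c+c'+2)/(\hat{c}+t+1)$. Using the Case-1 identity $\hat{c} = c+c'+d-f_0(\K \cap \K')$, the rearrangement
\[
C - A - B \;\geq\; \frac{C\,(d+t-1-f_0(\K \cap \K'))}{\hat{c}+t+1} \;\geq\; d+t-1-f_0(\K \cap \K')
\]
(valid since $d+t-1-f_0(\K \cap \K') \geq 0$ by hypothesis and $C \geq \hat{c}+t+1$) gives the Case-1 bound.

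The final equality statement follows by tracing the steps backwards: Case~2's strict slack of $t$ rules out $d' < d$; tightness of inclusion--exclusion then forces $f_{d-1}(\K \cap \K') = 0$; and tightness of the Case-1 reduction forces $f_0(\K \cap \K') = d+t-1$. The main obstacle I anticipate is the algebraic accounting in Case~1: even though each ratio bound is elementary, their \emph{additive} combination into the correct affine bound on $A+B$ hinges on the identity $c+c'+1-\hat{c}-t = -(d+t-1-f_0(\K \cap \K'))$, and it takes care to verify the needed signs when $d+t-1-f_0(\K \cap \K')$ vanishes.
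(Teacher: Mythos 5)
Your proof is correct and takes a genuinely different route from the paper's. Both arguments hinge on the same two inputs: $M_i(\hat{\F}) \geq i+1$ for every $i \leq \hat{c}$ (since $I_{\K\cup\K'}$ has no linear generators and any free resolution dominates the minimal one), and the defining relation $M_{\hat{c}}(\hat{\F}) = \hat{c}+t+1$. But the paper proves the key inequality $F(M(\F)) + F(M(\F')) \leq F(M(\hat{\F})) - z$ by a cascade of componentwise replacements (raise $M_i(\F)$, $M_i(\F')$ to a common dominating sequence $N$, then push entries down to $i+1$), checking at each step that the modification only tightens the inequality, and finally evaluating the extremal configuration where it becomes an equality. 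You instead package everything into closed-form ratio bounds $C/A \geq (\hat{c}+t+1)/(c+1)$ and $C/B \geq (\hat{c}+t+1)/(c'+1)$, combine them with the absolute lower bounds $A \geq c+1$, $B \geq c'+1$, $C \geq \hat{c}+t+1$, and land on the affine estimate after substituting the codimension identity $\hat{c} = c+c'+d - f_0(\K\cap\K')$. This is cleaner: it avoids having to justify that each reduction step preserves the direction of the inequality (a point the paper handles somewhat tersely). You even get the sharper Case-2 bound $f_{d-1}(\K\cup\K') \leq U - (n' - f_0(\K\cap\K')) - t$, which the proposition does not claim.

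One small slip in the equality analysis: you attribute the impossibility of equality when $d' < d$ to the ``strict slack of $t$,'' but when $t = 0$ that slack vanishes. The correct justification in that case is that $n' > f_0(\K\cap\K')$, which follows from $\K' \not\subset \K$ together with $\K$ and $\K'$ being induced subcomplexes of $\K\cup\K'$: if $V(\K') \subseteq V(\K)$ then $\K' = (\K\cup\K')[V(\K')] \subseteq (\K\cup\K')[V(\K)] = \K$, contradiction. Since this is exactly the content of the already-proved Case-2 bound, the fix is cosmetic, but the stated reason is not the right one.
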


We make a few comments before the proof.  If $\hat{\F}$ is the minimal free resolution and $M_{\hat{n}-d}(\hat{\F}) = \hat{n}-d+1$, then $\K \cup \K'$ satisfies the multiplicity upper bound conjecture.  The reason is the well-known result that $\K \cup \K'$ is $1$-Leray and hence satisfies $f_{d-1}(\K) \leq \hat{n}-d+1 = M_1(\hat{\F})\ldots M_{\hat{n}-d}(\hat{\F})/(\hat{n}-d)!$.  (See \cite{Kalai84} for a much stronger result.)  Thus, for an arbitrary free resolution $\hat{\F}$, if $M_{\hat{n}-d}(\K \cup \K',\hat{\F}) = \hat{n}-d+1$, i.e. $t=0$, then $\K \cup \K'$ satisfies the $\hat{\F}$-multiplicity upper bound conjecture.  Thus we may assume without loss of generality that $t \geq 1$.  Then Theorem \ref{Union} implies that if $f_0(\K \cap \K') \leq d$, and $\K$ and $\K'$ satisfy the $\F$- and $\F'$-multiplicity upper bound conjectures, then $\K \cup \K'$ satisfies the $\hat{F}$-multiplicity upper bound conjecture.

In the case of a disjoint union, Proposition \ref{UnionCor} implies that $\K \cup \K'$ satisfies the $\hat{F}$-multiplicity upper bound conjecture with equality only if both $\K$ and $\K'$ are of dimension $0$.

For simplicity, we will refer to the quantity $(t+d-1)-f_0(\K \cap \K')$ as $z$.  By hypothesis, $z \geq 0$.

\proofof{Proposition~\ref{UnionCor}} Denote the length of a sequence of positive integers $M$ by $\len M$, and define $F(M) = \prod_{i=1}^{\len M}M_i/(\len M)!$.  In general, if $N$ is a sequence of length $r-1$,we can construct $N'$ from $N$ by appending a value $a \geq r+1$.  Then $F(N') \geq \frac{a}{r}F(N)$.  If $F(N) \geq r$, then $F(N') \geq F(N)+1$.

First we treat the case that $\K$ and $\K'$ have different dimensions, given respectively by $d-1$ and $d'-1$.  Without loss of generality, assume that $d' < d$, and that $\K$ and $\K'$ have respectively $n$ and $n'$ vertices.  Then $\K \cup \K'$ has dimension $d$.

Applying the above observation to $M(\hat{\F})$, and using the fact that $\len(M(\hat{\F})) = \len(M(\F)) + n'-f_0(\K \cap \K')$, we have $F(M(\hat{\F}) \geq F(M(\F)) + n' - f_0(\K \cap \K')$.  Also, $f_{d-1}(\K) = f_{d-1}(\K \cup \K')$, which proves Theorem \ref{Union} and Proposition \ref{UnionCor} in the case that $\K$ and $\K'$ have different dimensions.

Now consider the case that $\K$ and $\K'$ both have dimension $d-1$.  Then $\K \cup \K'$ has $n+n'-f_0(\K \cap \K')$ vertices, and $M(\hat{\F})$ has length $n+n'-d-f_0(\K \cap \K')$.  Suppose without loss of generality that $n' \leq n$.

Observe that $f_{d-1}(\K \cup \K') \leq f_{d-1}(\K) + f_{d-1}(\K')$, with equality exactly when $\K \cap \K'$ does not contain a face of dimension $d-1$.  Hence the theorem and proposition will follow if $$F(M(\F)) + F(M(\F')) \leq F(M(\hat{\F})) - z.$$  By hypothesis, $M(\hat{\F}) \geq M(\F)$ and $M(\hat{\F}) \geq M(\F')$ componentwise; hence we may replace $M(\hat{\F})$ by the componentwise minimal sequence $N$ such that $N \geq M(\F)$ and $N \geq M(\F')$ and prove 
\begin{equation}
\label{IntersectionEquation}
F(N) - F(M(\F)) - F(M(\F')) \geq z.
\end{equation}

$M(\F), M(\F') \leq N$, so we may replace $M_i(\F)$ with $N_i$ and $M_i(\F')$ with $N_i$ whenever both are defined since this operation decreases the left side of Equation (\ref{IntersectionEquation}).  Next, since $z \geq 0$, we may replace $M_i(\F)$, $M_i(\F')$, and $N_i$ by $i+1$ whenever all three are defined since this operation multiplies the left side of Equation (\ref{IntersectionEquation}) by a real number less than $1$.  By adding $F(M(\F'))$ to each side of Equation (\ref{IntersectionEquation}), we may similarly replace $M_i(\F)$ and $N_i$ with $i+1$ when the two are defined.  Finally, by adding $F(M(\F)) + F(M(\F'))$ to each side of Equation (\ref{IntersectionEquation}), we may similarly replace $N_i$ with $i+1$ when $i < n+n'-d-f_0(\K\cap \K')$.

By hypothesis, $$N_{n+n'-d-f_0(\K \cap \K')} = n+n'-d-f_0(\K \cap \K')+t+1.$$  Then, $F(M(\F)) = n-d+1$, $F(M(\F')) = n'-d+1$, and $F(N) = n+n'-d-f_0(\K \cap \K')+t+1$.  This yields $F(M(\F)) + F(M(\F')) \leq F(N)-z$ as desired.
\endproof

Next we prove another union related result that we will use in the proofs of Theorems \ref{LargeComplexUpper} and \ref{Quad}.  Its proof is a generalization of a calculation in \cite{NovSw} that is used to prove the multiplicity conjecture for matroid complexes.

\begin{lemma}
\label{HomRed}
Let $\K$ be a simplicial complex with dimension $d-1$, $n>d$ vertices, and free resolution $\F$ such that $M_{n-d}(\F)=n$.  For each $v \in V(\K)$, suppose $\K-v$ has free resolution $\F_v$ and $M_i(\F_v) \leq M_i(\F)$ for $1 \leq i \leq n-d-1$.  If $\K-v$ satisfies the $\F_v$-upper bound conjecture for all $v \in V(\K)$, then $\K$ satisfies the $\F$-upper bound conjecture.
\end{lemma}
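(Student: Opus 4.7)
The plan is to reduce the bound on $f_{d-1}(\K)$ to bounds on the facet counts $f_{d-1}(\K-v)$ by an averaging argument, and then convert the $\F_v$-multiplicity inequalities into the desired $\F$-multiplicity inequality for $\K$. The starting identity is the double count
$$(n-d)\, f_{d-1}(\K) \;=\; \sum_{v\in V(\K)} f_{d-1}(\K-v),$$
which follows because each $(d-1)$-face of $\K$ belongs to $\K-v$ for exactly the $n-d$ vertices $v$ that it omits.

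Next I would fix $v\in V(\K)$ and split into cases. If $\K-v$ still has dimension $d-1$, then it is a complex on $n-1$ vertices with $\codim I_{\K-v}=(n-1)-(d-1)-1=n-d-1$, so the $\F_v$-multiplicity upper bound applied to $\K-v$ reads
$$f_{d-1}(\K-v)\;\leq\; U(\F_v) \;=\; \frac{M_1(\F_v)\cdots M_{n-d-1}(\F_v)}{(n-d-1)!}.$$
The componentwise hypothesis $M_i(\F_v)\leq M_i(\F)$ for $1\leq i\leq n-d-1$, together with $M_{n-d}(\F)=n$, then gives
$$U(\F_v)\;\leq\;\frac{M_1(\F)\cdots M_{n-d-1}(\F)}{(n-d-1)!}\;=\;\frac{n-d}{M_{n-d}(\F)}\,U(\F)\;=\;\frac{n-d}{n}\,U(\F).$$
If instead $\K-v$ has dimension strictly less than $d-1$, then $f_{d-1}(\K-v)=0$ and the same inequality holds trivially.

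Plugging the uniform bound $f_{d-1}(\K-v)\leq \frac{n-d}{n}\,U(\F)$ back into the averaging identity and dividing by $n-d$ gives
$$f_{d-1}(\K) \;\leq\; \frac{1}{n-d}\cdot n\cdot \frac{n-d}{n}\,U(\F) \;=\; U(\F),$$
which is exactly the $\F$-multiplicity upper bound for $\K$.

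The only real point to watch is the codimension bookkeeping: one must verify that $\codim I_{\K-v}$ drops by exactly one when $\K-v$ retains dimension $d-1$, so that the product cutoff $n-d-1$ appearing in $U(\F_v)$ lines up precisely with the range $1\leq i\leq n-d-1$ in which the hypothesis $M_i(\F_v)\leq M_i(\F)$ is available. Once this alignment is checked, the rest of the argument is a one-line calculation and presents no substantive obstacle.
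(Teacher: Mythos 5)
Your proof is correct and follows essentially the same route as the paper: the averaging identity $(n-d)f_{d-1}(\K)=\sum_v f_{d-1}(\K-v)$, followed by applying the $\F_v$-upper bound to each summand and using $M_i(\F_v)\leq M_i(\F)$ together with $M_{n-d}(\F)=n$. Your additional remarks on codimension bookkeeping and the degenerate case $\dim(\K-v)<d-1$ are careful elaborations of details the paper leaves implicit, not a different argument.
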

\begin{proof}
Since every top-dimensional face of $\K$ contains $d$ vertices,
$$f_{d-1}(\K) = \frac{1}{n-d}\sum_{v \in V(\K)}f_{d-1}(\K_v) \leq \frac{1}{n-d}\sum_{v \in V(\K)}\frac{\prod_{i=1}^{n-d-1}M_i(\F_v)}{(n-d-1)!} \leq \frac{\prod_{i=1}^{n-d} M(\F)}{(n-d)!}.$$
\end{proof}

The condition that $M_i(\F_v) \leq M_i(\F)$ for $1 \leq i \leq n-d-1$ is satisfied if all resolutions are Taylor resolutions or if all resolutions are minimal free resolutions.

The method of reducing to unions of subcomplexes can be extended beyond induced subcomplexes, and to unions of more than two subcomplexes.  The proof of Theorem \ref{AlmostQuadratic} illustrates this principle.

\section{Large simplicial complexes}
\label{LargeSection}
The main theorem of this section is that if a simplicial complex $\K$ has sufficiently many vertices relative to its dimension, then $\K$ satisfies both bounds of the Taylor conjecture.  Furthermore, in this case $\K$ achieves neither of the Taylor bounds.  We will prove the upper bound and lower bound statements separately.

Suppose $\m = x_{i_1}x_{i_2}\ldots x_{i_r}$ is a minimal generator of $I_\K$, while $W \subseteq V(\K)$.  Say that $\m$ is \textit{supported} on $W$ if $\{i_j\}_{i=1}^r \subseteq W$.  If $Y =\{\m_1,\ldots,\m_t\}$ is a subset of minimal generators of $I_\K$, we say $Y$ is supported on $W$ if for each $1 \leq i \leq t$, $\m_i$ is supported on $W$.  Let $\tilde{L}(\K)$ and $\tilde{U}(\K)$ be the conjectured lower and upper Taylor bounds on $f_{d-1}(\K)$.
\begin{theorem} 
Let $\K$ be a simplicial complex of dimension $d-1$ and $n > 24d + 3$ vertices.  Then $\K$ satisfies the Taylor lower bound conjecture without equality.
\end{theorem}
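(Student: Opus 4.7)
The plan is to upper-bound $\tilde L(\K):=\prod_{i=1}^c \tilde m_i/c!$ (with $c=n-d$) by a quantity that tends to zero as $c$ grows, so that it falls strictly below the trivial lower bound $f_{d-1}(\K)\geq 1$; the ``without equality'' clause then comes for free. Since $I_\K$ has codimension $c$ it has at least $c$ minimal generators, so the shifts $\tilde m_i$ are defined for every $1\le i\le c$.

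The central combinatorial input is that, because $\dim\K=d-1$, every $(d+1)$-subset of $V(\K)$ is a non-face of $\K$ and therefore contains at least one minimal non-face. Double-counting pairs $(F,S)$, where $F$ is a minimal non-face of $\K$, $S$ is a $(d+1)$-subset of a fixed $W\subseteq V(\K)$ of size $w$, and $F\subseteq S$, yields
\[
\sum_{F\subseteq W}\binom{w-|F|}{d+1-|F|}\;\geq\;\binom{w}{d+1}.
\]
Since each summand is bounded above by $\binom{w-2}{d-1}$, the number of minimal generators of $I_\K$ supported on $W$ is at least $w(w-1)/[d(d+1)]$, which gives $\tilde m_i\leq \lceil (1+\sqrt{1+4id(d+1)})/2\rceil$. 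Substituting into the product defining $\tilde L(\K)$ and applying Stirling produces an estimate of the shape $\tilde L(\K)\lesssim (e\,d(d+1)/c)^{c/2}$, which drops below $1$ whenever $c$ exceeds $e\,d(d+1)$.

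The main obstacle is that the naive double count requires $c$ to be quadratic in $d$, whereas the hypothesis only provides the linear slack $c\geq 23d+3$. To close the gap I would case-split on the minimum generator size $\tilde m_1$. If $\tilde m_1=2$, then the $1$-skeleton of $\K$ has clique number at most $d$, so Tur\'an's theorem forces an independent set of size at least $n/d$; taking $W$ inside such a set produces $\binom{|W|}{2}$ non-edges supported on $W$ and the much sharper bound $\tilde m_i\leq \lceil\sqrt{2i}\rceil+1$. If $\tilde m_1\geq 3$, the bounding term $\binom{w-|F|}{d+1-|F|}$ in the double count strictly shrinks to at most $\binom{w-3}{d-2}$, improving $N(W)$ by a factor of $(w-2)/(d-1)$ and sharpening the first estimate. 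In the Cohen--Macaulay case one can moreover replace $f_{d-1}(\K)\geq 1$ by $f_{d-1}(\K)\geq n-d+1$ via the non-negativity of the $h$-vector, which leaves considerable slack; the calibration of the refined shift bound to the precise linear constant $24$ is the one delicate step I expect the actual proof to spell out carefully.
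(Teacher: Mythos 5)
Your overall strategy matches the paper's: drive $\tilde L(\K)$ below $1$ by showing some small vertex set $W$ supports many minimal generators of $I_\K$, and quantify the count of non-faces in $\K[W]$ via a Tur\'an-type estimate. However, the details diverge and your argument has a genuine gap.

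The paper fixes a single set $W$ of size $\lfloor n/3\rfloor$ and shows $W$ supports at least $n-d$ minimal generators, hence $\tilde m_i \leq n/3$ for \emph{every} $i\leq n-d$; a one-line Stirling estimate then gives $\tilde L(\K)<1$ once $n/3 < (n-d)/e$. The key counting lemma (that any induced subcomplex on $n'>4d$ vertices has at least $3(n')^2/(8d)$ minimal non-faces) is proved by first reducing to the flag case: whenever $\prod_k x_{i_k}$ is a minimal non-face of degree $\geq 3$, replace it with $x_{i_1}x_{i_2}$ and delete the minimal non-faces it renders non-minimal. This operation only decreases the number of minimal non-faces, so the lower bound for the resulting flag complex transfers back. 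Once flag, the $1$-skeleton is automatically $K_{d+1}$-free and Tur\'an's theorem applies directly to count non-edges. Your double-count of pairs $(F,S)$ is a workable substitute for the Tur\'an step, but as you note it only gives $N(W)\gtrsim w^2/(d(d+1))$, which pushes the threshold to $c\gtrsim d^2$, far from the stated linear bound.

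The concrete gap is in your case split. Your claim that ``if $\tilde m_1 = 2$, then the $1$-skeleton of $\K$ has clique number at most $d$'' is false: $\dim\K = d-1$ forbids a $d$-dimensional \emph{face}, but the $1$-skeleton can perfectly well contain a $K_{d+1}$ whose $d$-subsets are faces without the full $(d+1)$-set being one (e.g.\ the boundary of a simplex). The correct fix is exactly the paper's flag reduction, which you did not perform; $\tilde m_1 = 2$ only tells you that \emph{some} minimal non-face has degree $2$, not that all do, and it says nothing about cliques. Likewise, the case $\tilde m_1\geq 3$ sharpens the bound only if \emph{every} minimal non-face has degree $\geq 3$, but the case split as stated does not guarantee that. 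Without the flag reduction, the two cases you describe do not cover the mixed situation and neither gives the claimed improvement. You also left the final calibration to the constant $24$ unverified; with the gaps above, there is no reason to expect your refined estimate reaches a linear threshold at all, whereas the paper's single-$W$ argument closes it cleanly.
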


\begin{proof}
Suppose there are $n-d$ distinct minimal generators of $I_{\K}$ that are supported on $n' < n$ vertices of $\K$.  Then $\tilde{m}(\K) \leq (n',n',\ldots,n')$ componentwise.  In that case, $$\tilde{L}(\K) \leq \frac{(n')^{n-d}}{(n-d)!} = \frac{(n-d)^{n-d}(\frac{n'}{n-d})^{n-d}}{(n-d)!} < (\frac{en'}{n-d})^{n-d}.$$  The last inequality follows from Stirling's approximation.  We then have $(\frac{en'}{n-d})^{n-d} < 1$ if $n' < (n-d)/{e}$, which occurs if $n' \leq n/3$ and $n > 11d$.  In this case, since $\K$ has at least one face of dimension $d-1$, $\K$ satisfies the Taylor lower bound conjecture without equality.

It thus suffices to prove the claim that if $n > 24d+3$, then there exists a set of $\lfloor n/3 \rfloor$ vertices that support $n-d$ monomials.  First, we will show that if $\Delta$ is an arbitrary simplicial complex of dimension at most $d-1$ and $n' > 4d$ vertices, then $I_{\Delta}$ has at least ${3(n')^2}/(8d)$ minimal generators.  If $\prod_{k=1}^t x_{i_k}$ is a minimal generator of $I_{\Delta}$ of degree at least $3$, we may without loss of generality replace $\prod_{k=1}^t x_{i_k}$ with $x_{i_1}x_{i_2}$ and delete all minimal generators of $I_{\Delta}$ that are multiples of $x_{i_1}x_{i_2}$.  Hence we may assume for the claim, without loss of generality, that $I_\Delta$ is quadratic, or that $\Delta$ is a flag complex.

Tur\'{a}n's theorem states that if $G$ is a graph that avoids cliques of size $d+1$, then $G$ has at most $(d-1)n^2/(2d)$ edges \cite{Bollobas}.  Since the graph of $\Delta$ avoids cliques of size $d+1$, Tur\'{a}n's theorem applies and $\Delta$ misses at least $$\frac{n'(n'-1)}{2} - \frac{(d-1)(n')^2}{2d} = \frac{n'(n'-d)}{2d} > \frac{3(n')^2}{8d}$$ edges.  Hence $I_\Delta$ has at least ${3(n')^2}/(8d)$ generators.

If $W \subset V(\K)$ and $|W| = n'$, then $I_{\K[W]}$ has at least $3(n')^2/(8d)$ minimal generators, each of which is a minimal generator of $I_\K$.  If $n > 24d + 3$ and $n' = \lfloor n/3 \rfloor$, then ${3(n')^2}/(8d) \geq n-d$, which proves the theorem.
\end{proof}

Observe that we did not assume that $\K$ is Cohen-Macaulay.  However, the Cohen-Macaulay assumption is necessary for complexes with few vertices.

With the hypothesis that $\K$ is completely balanced, we can tighten our bound on $n$.

\begin{theorem}
Let $\K$ be a Cohen-Macaulay completely balanced complex of dimension $d-1$ and $n \geq 3d$ vertices.  Then $\K$ satisfies the Taylor lower bound conjecture.
\end{theorem}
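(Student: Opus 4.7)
Since $\K$ is Cohen-Macaulay of dimension $d-1$, its $h$-vector $(h_0,\dots,h_d)$ is non-negative with $h_0=1$ and $h_1=n-d$, so
\[
f_{d-1}(\K)=h_0+h_1+\dots+h_d\ \ge\ n-d+1.
\]
My plan is therefore to prove $\tilde{L}(\K)\le n-d+1$, which together with the above would yield the theorem.

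To bound $\tilde{L}(\K)$ I would exhibit an explicit ordering of $n-d$ quadratic minimal generators of $I_\K$ whose running LCMs grow slowly. Every pair $\{u,v\}$ with $u,v$ in a common color class is a minimal non-face, hence a quadratic generator of $I_\K$. Let $V_{j_1},V_{j_2},\dots$ denote the color classes of size at least $2$, listed in decreasing order of size; inside each $V_{j_k}$ I would fix a vertex $v_k$ and list first the $n_{j_k}-1$ pairs $\{v_k,u\}$ and then the $\binom{n_{j_k}-1}{2}$ remaining pairs. A convexity estimate gives $\sum_i\binom{n_i}{2}\ge n(n-d)/(2d)\ge n-d$ when $n\ge 3d$, so concatenating these lists and truncating at position $n-d$ is well defined. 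Setting $t_k=\sum_{l\le k}n_{j_l}$ and $P_k=\sum_{l\le k}\binom{n_{j_l}}{2}$, a step-by-step accounting of LCM growth shows that for $i=P_{k-1}+\ell$ lying inside $V_{j_k}$, the LCM of the first $i$ pairs has degree at most $t_{k-1}+\min(\ell+1,n_{j_k})$.

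The theorem will then reduce to the estimate $t_{k-1}\le P_{k-1}$ at every relevant $k$, for this gives $\tilde{m}_i\le i+1$ for $1\le i\le n-d$ and hence $\prod_i\tilde{m}_i\le (n-d+1)!$, i.e.\ $\tilde{L}(\K)\le n-d+1$. The main obstacle is this combinatorial inequality, since size-$2$ color classes contribute $-1$ to $P_k-t_k$ while size-$3$ classes contribute $0$, and only classes of size $\ge 4$ supply the positive excess needed to compensate. Because my decreasing-size ordering places every size-$\ge 3$ class before any size-$2$ class, $P_k-t_k$ is non-decreasing through the first phase and then drops by $1$ per size-$2$ class. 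Writing $|A|_k$ for the number of color classes of size $k$, the identity
\[
n-3d=\sum_{k\ge 4}(k-3)|A|_k-|A|_2-2|A|_1,
\]
together with the elementary inequality $k(k-3)/2\ge k-3$ valid for $k\ge 4$, should show that the excess $\sum_{k\ge 4}|A|_k\cdot k(k-3)/2$ accrued during the first phase is at least $|A|_2+2|A|_1\ge|A|_2$, which is precisely enough to absorb the $|A|_2$ subsequent unit decreases and keep $P_k-t_k\ge 0$ throughout, completing the proof.
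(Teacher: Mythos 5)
Your proposal is correct and follows essentially the same strategy as the paper: order the color classes by decreasing size, enumerate the intra-class quadratic generators so that the running LCM degree stays at most one ahead of the running count, conclude $\tilde{m}_i \le i+1$ for $1 \le i \le n-d$, hence $\tilde{L}(\K) \le n-d+1$, and finish with the $h$-vector inequality $f_{d-1}(\K) \ge n-d+1$ from Cohen--Macaulayness. The place where you diverge from the paper is in how you justify the key combinatorial inequality $t_{k-1}\le P_{k-1}$ (equivalently, that the cumulative vertex count never exceeds the cumulative pair count). The paper uses the prefix bound $\sum_{i\le k}n_i\ge 3k$ (which follows from $n\ge 3d$ and the decreasing ordering) together with an implicit convexity/power-mean step to conclude $\sum_{i\le k}\binom{n_i}{2}\ge\sum_{i\le k}n_i$ at every prefix. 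You instead argue by unimodality of the partial sums $P_k-t_k$ (nonnegative increments until the size-$2$ classes, then unit decrements), which reduces everything to the single global inequality $\sum_{k\ge 4}\frac{k(k-3)}{2}|A|_k\ge|A|_2$, and you derive that from the identity $n-3d=\sum_{k\ge 4}(k-3)|A|_k-|A|_2-2|A|_1$ together with $\frac{k(k-3)}{2}\ge k-3$ for $k\ge 4$. Both arguments are sound; yours trades a prefix-by-prefix estimate for a unimodality observation plus one closing count, which is marginally more bookkeeping but equally elementary. One small cosmetic difference: you use a star ordering inside each color class, the paper uses lexicographic; both control the per-step LCM growth identically.
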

\begin{proof}
For $1 \leq i \leq d$, let $n_i$ be the number of vertices of color $i$, and suppose the colors are arranged so that $n_1 \geq n_2 \geq \ldots \geq n_d$.  Since $n \geq 3d$, $\sum_{i=1}^k n_i \geq 3k$ for $1 \leq k \leq d$.

Let $V_i$ be the set of vertices of color $i$.  Then $V_i$ supports ${n_i \choose 2}$ minimal generators of $I_\K$: namely all monomials of the form $x_sx_t$ for $s,t \in V_i$.  Since $n_1 \geq 3$, we conclude from the minimal generators supported on $V_1$ that $\tilde{m}_{{n_1 \choose 2}} \leq n_1$ and hence $\tilde{m}_{n_1} \leq n_1$.  Similarly 

\begin{equation}
\label{smallm}
\tilde{m}_{\sum_{i=1}^k {n_i \choose 2}} \leq \sum_{i=1}^k n_i \quad {\mbox{\upshape and}\,} \quad \tilde{m}_{\sum_{i=1}^k n_i} \leq \sum_{i=1}^k n_i.
\end{equation}

For $\sum_{i=1}^k n_i < r < \sum_{i=1}^{k+1} n_i$, we will construct a set of $r$ minimal generators supported on at most $r+1$ vertices.  First, by (\ref{smallm}), construct a set of $\sum_{i=1}^k n_i$ minimal generators supported on the $\sum_{i=1}^k n_i$ vertices of $V_1 \cup \ldots \cup V_k$.  With $q = \sum_{i=1}^k n_i$, label these minimal generators $\m_1, \ldots, \m_q$.  Then add the first $r - q$ minimal generators, ordered lexicographically, in $V_{k+1}$, which we will label $\m_{q+1},\ldots,\m_r$.  The support of $\{\m_{q+1},\ldots,\m_r\}$ consists of at most $r - q + 1$ vertices.  Hence $\tilde{m}_{r} \leq r+1$.  Using this and Equation (\ref{smallm}), we conclude that $\tilde{m}_r \leq r+1$ for all $1 \leq r \leq n-d$.  Hence $\tilde{L}(\K) \leq n-d+1$ and the Taylor lower bound conjecture holds since a $(d-1)$-dimensional Cohen-Macaulay complex has at least $n-d+1$ top-dimensional faces.
\end{proof}

We need the following lemma for the proof of the Taylor upper bound inequality.

\begin{lemma}
\label{IncM}
Let $I$ be a monomial ideal of $S$ with Taylor maximal shifts $M_1,\ldots,M_c$.  Suppose $S$ has $n$ indeterminants, of which $r$ appear in $\GEN(I)$.  If for $i < c$, $M_i < r$, then $M_{i+1} > M_i$.  If $M_i=r$, then $M_{i+1}=r$.
\end{lemma}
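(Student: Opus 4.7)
My plan is based on the characterization $\tilde M_i = \max\{\deg\LCM(T) : T\subseteq\GEN(I),\ |T|=i\}$ from (\ref{TaylorM}). In the squarefree setting relevant to this paper (every monomial ideal may be replaced by its polarization without disturbing Taylor shifts), every LCM of generators has degree at most $r$, so in particular $\tilde M_i \leq r$ always. The strategy in both cases is to take a witness $T$ for $M_i$ and enlarge it by one extra generator, then track what happens to the LCM-degree. Throughout, the assumption $i<c\leq|\GEN(I)|$ ensures that at least one generator outside $T$ is available to adjoin.

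For the first claim I would fix $T\subseteq\GEN(I)$ with $|T|=i$ and $\deg\LCM(T)=M_i<r$. Since $\LCM(T)$ then involves fewer than $r$ variables, some variable $x_k$ that appears in $\GEN(I)$ is absent from $\LCM(T)$. Any generator divisible by $x_k$ must lie outside $T$, because if such a generator were in $T$ then $x_k$ would already divide $\LCM(T)$. Picking such a generator $\m'$ and setting $T':=T\cup\{\m'\}$ gives an $(i+1)$-subset of $\GEN(I)$ whose LCM gains at least the factor $x_k$, so $\deg\LCM(T')\geq M_i+1$, and hence $M_{i+1}>M_i$.

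For the second claim I would fix $T$ with $|T|=i$ and $\deg\LCM(T)=r$. In the squarefree setting this forces $\LCM(T)=\prod_{x_k\in\GEN(I)} x_k$, so every variable appearing anywhere in $\GEN(I)$ already divides $\LCM(T)$. Adjoining any $\m'\in\GEN(I)\setminus T$ therefore leaves the LCM unchanged, yielding a size-$(i+1)$ subset of LCM-degree $r$, so $M_{i+1}\geq r$. Combined with the universal bound $M_{i+1}\leq r$ this gives $M_{i+1}=r$.

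The only subtle structural point is the observation in Case 1 that every generator divisible by the missing variable $x_k$ is automatically outside $T$; this is the whole content of the argument and is immediate once one notes that the support of $\LCM(T)$ is exactly the union of the supports of the elements of $T$. The remaining bookkeeping—existence of $\m'$ and the inclusion $T'\subseteq\GEN(I)$—is a one-line cardinality check using $i+1\leq c\leq|\GEN(I)|$.
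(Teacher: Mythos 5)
Your proof is correct and spells out exactly what the paper leaves implicit: the paper's entire proof of this lemma is the single sentence ``This follows immediately from Equation~(\ref{TaylorM}),'' and your argument is precisely the content of that ``immediately.'' One remark worth making explicit: the squarefree hypothesis you invoke is genuinely needed and not merely a convenience---as literally stated (for an arbitrary monomial ideal) the second claim fails, e.g.\ for $I=(x^2,y)$ one has $r=2$, $M_1=2=r$, yet $M_2=3$; and polarization does not rescue the literal statement since it changes $r$. The lemma is only ever applied in the paper to Stanley--Reisner (hence squarefree) ideals, which is the reading you correctly adopt.
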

\begin{proof}
This follows immediately from Equation (\ref{TaylorM}).
\end{proof}

\begin{theorem}
\label{LargeComplexUpper}
Let $\K$ be a simplicial complex of dimension $d-1$ and $n \geq 9d+1$ vertices.  Then $\K$ satisfies the Taylor upper bound conjecture without equality.
\end{theorem}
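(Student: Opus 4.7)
The plan is to apply Lemma~\ref{HomRed} to the Taylor resolution by induction on $n$, after first establishing $\tilde M_{n-d}(\K)=n$ through a greedy covering argument.

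To set up Lemma~\ref{HomRed}, I would cover $[n]$ by at most $(n+d)/2$ distinct minimal non-faces of $\K$. After treating any cone vertex (one lying in no minimal non-face) separately via the cone decomposition $\K=v\star(\K-v)$, I run a two-phase greedy algorithm. In phase~1, I repeatedly pick a minimal non-face $F$ with $|F\setminus U|\geq 2$, where $U$ is the currently covered set, until no such $F$ remains. At that point every minimal non-face either lies in $U$ or meets $U^c$ in exactly one vertex, so $U^c$ contains no minimal non-face and is therefore a face of $\K$, whence $|U^c|\leq d$ because $\dim\K=d-1$. In phase~2, I cover each remaining vertex $v\in U^c$ individually by picking a minimal non-face through $v$; such a non-face exists by the no-cone-vertex assumption and must intersect $U^c$ only at $v$. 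The total count is at most $(n+d)/2\leq n-d$, since $n\geq 9d+1$ yields $n\geq 3d$. Lemma~\ref{IncM} then forces $\tilde M_{n-d}(\K)=n$ and leaves some $\tilde M_i<n$ for $i\leq n-d-1$, which supplies the slack needed for the strict inequality.

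Next I would apply Lemma~\ref{HomRed}, whose dominance condition $M_i(\F_v)\leq M_i(\F)$ is automatic for Taylor resolutions. It remains to verify that each $\K-v$ satisfies the Taylor upper bound conjecture. The complex $\K-v$ has $n-1$ vertices and dimension $d'-1\leq d-1$; when $d'<d$ the inductive hypothesis applies since $n-1\geq 9d\geq 9d'+1$, and when $d'=d$ it requires $n-1\geq 9d+1$, i.e., $n\geq 9d+2$. The induction thus closes for $n\geq 9d+2$, and strict inequality propagates through Lemma~\ref{HomRed} because at least one $\tilde M_i$ from the covering is strictly less than $n$, preventing equality in the bound $\sum_v\tilde U(\K-v)\leq(n-d)\tilde U(\K)$.

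The main obstacle is the base case $n=9d+1$ in which $\dim(\K-v)=d-1$ for some $v$, so that the inductive hypothesis does not directly apply. I would handle this case by a direct numerical estimate: combining the monotonicity bound $\tilde M_i\geq i+1$ from Lemma~\ref{IncM} (sharpened to $\tilde M_i\geq i+\tilde M_1-1$ using the largest minimal non-face of $\K$) with $\tilde M_i=n$ for $i\geq i^*\leq(n+d)/2$ from the covering, and comparing the resulting lower bound on $\tilde U(\K)$ against $f_{d-1}(\K)\leq\binom{n}{d}$ by an explicit factorial inequality at $n=9d+1$. The delicate point is arranging the sharpened lower bound on $\tilde M_i$ so that the factorial comparison goes through uniformly in $d$.
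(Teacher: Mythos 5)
Your strategy (Lemma~\ref{HomRed} plus induction on $n$, seeded by a two-phase greedy covering argument) is quite different from the paper's, which does \emph{not} use Lemma~\ref{HomRed} for this theorem at all. The paper instead proves the estimate $\tilde M_i(\K)\geq 2i$ for $1\leq i\leq k$ (via the same ``$d+1$ uncovered vertices support a fresh generator'' observation you use, run one step at a time rather than in a two-phase covering) together with $\tilde M_i(\K)\geq i+d$ for $i>k$ via Lemma~\ref{IncM}, and then compares $\tilde U(\K)$ to $\binom{n}{d}$ directly: $\tilde U(\K)\geq 2^k\binom{n}{d}/\binom{k+d}{d}$, and a Stirling estimate shows $2^k\geq\binom{k+d}{d}$ once $k\geq 4d$, giving the threshold $n\geq 2k+d+1 = 9d+1$. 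This is a single closed-form computation with no induction and no recourse to $\tilde M_{n-d}=n$.

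The gap in your proposal is exactly the one you flag: the base case $n=9d+1$. When $\dim(\K-v)=d-1$ for some $v$ (and such a $v$ always exists, since $n>d$), $\K-v$ has $9d$ vertices and dimension $d-1$, so the inductive hypothesis does not reach it, and Lemma~\ref{HomRed} cannot be invoked. You propose closing this with ``a direct numerical estimate,'' but that estimate is not a minor patch --- it is the entire substance of the theorem. Once you carry it out, you have proved the statement for \emph{all} $n\geq 9d+1$ directly, and the Lemma~\ref{HomRed} induction becomes dead weight. The covering argument you give is correct (and neatly yields $\tilde M_{j}(\K)=n$ for some $j\leq(n+d)/2\leq n-d$), but the naive product bound $\tilde M_i\geq i+1$ combined with $\tilde M_i=n$ for $i\geq j$ is not strong enough to beat $\binom{n}{d}$ at $n=9d+1$: one needs the sharper $\tilde M_i\geq 2i$ regime for $i$ up to roughly $4d$, and checking that the factorial comparison then goes through is precisely the Stirling computation in the paper. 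A secondary (smaller) issue: your justification of strict inequality --- ``at least one $\tilde M_i$ from the covering is strictly less than $n$'' --- does not by itself make any link in the chain of inequalities in Lemma~\ref{HomRed} strict; the relevant observation is rather that $\tilde M_{n-d-1}(\K-v)\leq n-1 < n = \tilde M_{n-d-1}(\K)$, which you do not state.
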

\begin{proof}
If $\K$ is a cone with apex $v$, then $\K-v$ satisfies the conditions of the theorem.  Hence by induction on $d$ we may assume without loss of generality that $\K$ is not a cone.  Suppose $k$ is an integer so that $k \geq d$ and $n \geq 2k+d+1$.  Then $\tilde{M}(\K) \geq (2,4,\ldots,2k,k+d,k+d+1,\ldots)$.  We can see that $\tilde{M}_i \geq 2i$ for $1 \leq i \leq k$ inductively on $i$: if $\tilde{M}_{i-1} \geq 2k$, then $\tilde{M}_i \geq 2k$, while if $2(i-1) \leq \tilde{M}_{i-1} < 2k$, then consider $\mathcal{M} \subset \GEN(I_\K)$ with $|\mathcal{M}| = i-1$ such that $\mathcal{M}$ is supported on $\tilde{M}_{i-1}$ vertices.  There exist at least $d+1$ vertices not in the support of $\mathcal{M}$, which therefore support an additional minimal generator $\m$ of $I_\K$.  Hence $\tilde{M}_{i} \geq 2i$ by considering $\mathcal{M} \cup \{\m\}$.  The condition $\tilde{M}_i(\K) \geq i+d$ for $i > k$ follows from Lemma \ref{IncM} and the fact that $\K$ is not a cone.

If $\prod_{i=1}^k 2i \geq k!{k+d \choose d}$, then $\tilde{U}(\K) \geq {n \choose d}$, in which case the Taylor upper bound conjecture for $\K$ follows.  In turn, this inequality follows if $2^k \geq {k+d \choose d}$.  By Stirling's formula, the previous inequality follows if
$$
2^k > \frac{(k+d)^{k+d}e^k e^d}{e^{k+d}k^k d^d}.
$$
Taking the natural logarithm of both sides, the above follows if
$$
k \ln 2 + d \ln d + k \ln k \geq (k+d) \ln (k+d).
$$
Let $k = ad$.  Then, after simplification, the above equation is equivalent to
$$
a \ln 2 \geq (a+1) \ln \frac{a+1}{a} + \ln a.
$$

This is true if $a \geq 4$.  So $k \geq 4d$ and the Taylor upper bound conjecture holds when $n \geq 9d+1$.
\end{proof}

Our next result allows us to restrict to even smaller values of $n$ under suitable conditions when considering the Taylor upper bound conjecture.

\begin{lemma}
\label{n3d}
Let $\mathcal{C}$ be a class of simplicial complexes that is closed under induced subcomplexes.  Suppose every complex in $\mathcal{C}$ of dimension $d-1$ and fewer than $3d$ vertices satisfies the Taylor upper bound conjecture.  Then every complex in $\mathcal{C}$ of dimension $d-1$ satisfies the Taylor upper bound conjecture.
\end{lemma}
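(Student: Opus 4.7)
The plan is a strong induction on $n = |V(\K)|$. When $n < 3d$ the conclusion is immediate from the hypothesis on $\mathcal{C}$. For the inductive step I take $\K \in \mathcal{C}$ with $\dim \K = d-1$ and $n \geq 3d$ vertices, and assume the Taylor upper bound for every complex in $\mathcal{C}$ of dimension $d-1$ with fewer than $n$ vertices.

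I would first dispose of cones. If $\K$ has an apex $v$, then no minimal generator of $I_\K$ involves $x_v$, so $I_\K = I_{\K-v}$ as ideals; consequently the Taylor resolution, the codimension $n-d$, and the multiplicity $f_{d-1}(\K) = f_{d-2}(\K - v)$ all pass verbatim to $\K - v$, and the Taylor upper bound for $\K$ is literally the same inequality as the Taylor upper bound for $\K - v$. The drop in dimension is absorbed by an outer induction on $d$ in the intended applications (such as the proof of Theorem \ref{FlagUB}), so from here on I assume $\K$ is not a cone.

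Under this assumption I plan to apply Lemma \ref{HomRed} to the Taylor resolution $\tilde{\F}$ of $S/I_\K$. Two of its three hypotheses are essentially immediate. For any $v \in V(\K)$ the induced subcomplex $\K - v$ lies in $\mathcal{C}$, still has dimension $d-1$ (some facet of $\K$ avoids $v$ because $\K$ is not a cone), and has $n - 1 < n$ vertices, so by the inductive hypothesis it satisfies the Taylor upper bound; and since $\GEN(I_{\K-v}) \subseteq \GEN(I_\K)$ one has $\tilde{M}_i(\K - v) \leq \tilde{M}_i(\K)$ for every $i$.

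The main obstacle is verifying the remaining hypothesis $\tilde{M}_{n-d}(\K) = n$. Since $\K$ is not a cone, every variable appears in $\GEN(I_\K)$, so Lemma \ref{IncM} says that the Taylor maximal shifts increase strictly until they saturate at $n$. I would then press the growth argument from the proof of Theorem \ref{LargeComplexUpper} just enough to handle the threshold $n = 3d$: if $\tilde{M}_{d-1} = 2d - 2$, then $n - (2d - 2) \geq d + 2$ vertices lie outside the support of an optimal set of $d-1$ generators, and because $\dim \K = d-1$ these vertices must contain a minimal non-face, producing a new generator disjoint from the existing support that enlarges it by at least $2$ and hence $\tilde{M}_d \geq 2d$; the remaining cases $\tilde{M}_{d-1} \geq 2d - 1$ give $\tilde{M}_d \geq 2d$ directly from Lemma \ref{IncM}. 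Iterating Lemma \ref{IncM} from index $d$ onwards forces $\tilde{M}_{n-d} \geq \min(n,\, 2d + (n - 2d)) = n$, so $\tilde{M}_{n-d} = n$. With all three hypotheses verified, Lemma \ref{HomRed} delivers the Taylor upper bound for $\K$.
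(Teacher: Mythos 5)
Your proof is correct and follows essentially the same route as the paper: dispose of cones by passing to $\K - v$ (you rightly flag that the resulting dimension drop is absorbed by an outer induction on $d$, a point the paper leaves implicit), then for non-cones establish $\tilde{M}_{n-d}(\K) = n$ and apply Lemma~\ref{HomRed} with induction on the number of vertices. The only cosmetic difference is in verifying $\tilde{M}_{n-d}=n$: the paper greedily picks a maximal family of $t$ pairwise-coprime minimal generators whose lcm has degree $r \geq n-d \geq 2d$, notes $t \leq r-d$ so $\tilde{M}_t \geq t+d$, and iterates Lemma~\ref{IncM}, whereas you push the $\tilde{M}_i \geq 2i$ bound up to $i=d$ and iterate from there---the two arguments are equivalent.
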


$\mathcal{C}$ can be the class of all simplicial complexes.  In Section \ref{QuadraticSection}, we use Lemma \ref{n3d} with $\mathcal{C}$ as the class of flag complexes.

\begin{proof}
Let $\K \in \mathcal{C}$, and suppose $\K$ has dimension $d-1$ and $n \geq 3d$ vertices.  If $\K$ is a cone, then without loss of generality we may remove the apex $v$ to obtain $\K'$ with $n-1$ vertices, dimension $d-2$, and $f_{d-2}(\K') = f_{d-1}(\K)$.  Since $n-1 \geq 3(d-1)$, the lemma applies to $\K'$.  Therefore, we will assume that $\K$ is not a cone.

We claim that $\tilde{M}_{n-d}(\K) = n$.  Assuming this claim, it follows by Lemma \ref{HomRed} and induction on $n$ that $\K$ satisfies the Taylor upper bound conjecture.  Since every set of $d+1$ vertices of $\K$ supports a minimal generator in $I_\K$, for some integer $r \geq 2d$ there exists $t$ disjoint minimal generators whose LCM has degree $r$.  Necessarily, $t \leq r-d$, and hence $\tilde{M}_t \geq r \geq t+d$.  Since $\K$ is not a cone, it follows from Lemma \ref{IncM} that $\tilde{M}_{n-d}(\K) = n$.  This proves the theorem.
\end{proof}

\section{Quadratic ideals}
\label{QuadraticSection}

Our main result of this section is the following.

\begin{theorem}
\label{Quad}
All quadratic monomial ideals satisfy the Taylor upper bound conjecture, and all Cohen-Macaulay quadratic monomial ideals satisfy the Taylor lower bound conjecture.
\end{theorem}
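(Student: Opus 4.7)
The first step is to apply the polarization proposition to reduce from quadratic monomial ideals to squarefree quadratic monomial ideals, i.e., Stanley--Reisner ideals of flag complexes. So assume $I = I_\K$ for some flag complex $\K$ of dimension $d-1$ on $n$ vertices, and let $G$ be the graph whose edges are the minimal non-faces of $\K$. The minimal generators of $I_\K$ are then in bijection with $E(G)$, which gives the combinatorial descriptions $\tilde{M}_i(\K) = \max\{|V(S)| : S \subseteq E(G),\ |S|=i\}$ and $\tilde{m}_i(\K) = \min\{|V(S)| : S \subseteq E(G),\ |S|=i\}$.

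For the upper bound, the class of flag complexes is closed under induced subcomplexes (restricting a quadratic ideal yields a quadratic ideal), so Lemma \ref{n3d} reduces the problem to $n < 3d$. In this range I would induct on $n$: as usual we may assume $\K$ is not a cone, and then Lemma \ref{IncM} combined with the non-cone hypothesis should force $\tilde{M}_{n-d}(\K) = n$, so Lemma \ref{HomRed} lets us pass from $\K$ to the induced subcomplexes $\K - v$ (which remain flag). The delicate part is the base of the induction for $n$ close to $2d$: here one must directly compare the number of $d$-cliques in the complement of $G$ with $\prod_{i=1}^{n-d} \tilde{M}_i/(n-d)!$, using $\tilde{M}_i \geq 2i$ whenever $G$ contains a matching of size $i$, and invoking the union reduction of Proposition \ref{UnionCor} when $\K$ decomposes along a small separator.

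For the Cohen--Macaulay lower bound, the key fact is that $\tilde{m}_i \geq k(i)$, where $k(i)$ is the smallest integer with $\binom{k(i)}{2} \geq i$; equality holds exactly when $G$ contains a $k(i)$-clique, equivalently when the $1$-skeleton of $\K$ admits an independent set of that size. The Cohen--Macaulay hypothesis forces $f_{d-1}(\K) \geq n-d+1$ via nonnegativity of the $h$-vector. The inequality $f_{d-1}(\K) \geq \prod_{i=1}^{n-d} \tilde{m}_i/(n-d)!$ should then follow from an arithmetic estimate on $\prod_i k(i)$: since $k(i) \sim \sqrt{2i}$, the product divided by $(n-d)!$ decays and is dominated by the linear facet count, with the small-$i$ boundary terms handled by the observation that $\tilde{m}_i \leq i+1$ whenever $G$ contains a suitable ``path-like'' subgraph.

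The principal obstacle lies in the upper bound's tight range $2d \leq n < 3d$, where the Stirling estimate used for Theorem \ref{LargeComplexUpper} is unavailable and the Taylor shifts $\tilde{M}_i$ are close to their minimum possible value $i+1$. In this regime the bound $\prod \tilde{M}_i/(n-d)!$ only barely exceeds $f_{d-1}(\K)$, so the induction must track the shifts nearly exactly. I expect the proof will require a case analysis on the structure of $G$ --- whether it contains a large matching, a large clique, or a vertex of high degree --- combined with repeated application of Lemmas \ref{HomRed} and \ref{IncM} and, where needed, the union reductions of Section \ref{UnionSection}.
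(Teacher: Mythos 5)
Your reduction to flag complexes via polarization, and your use of Lemma~\ref{n3d} to restrict to $n<3d$, are both correct and agree with the paper's proof of Theorem~\ref{FlagUB}. However, the step ``Lemma~\ref{IncM} combined with the non-cone hypothesis should force $\tilde{M}_{n-d}(\K)=n$'' is false in the range $n<3d$. That implication is exactly what the proof of Lemma~\ref{n3d} establishes for $n\ge 3d$, and it uses that inequality essentially: one first finds $t$ pairwise-disjoint generators of total degree $r\ge n-d\ge 2d$, which gives $t\le r/2\le r-d$ and hence $\tilde{M}_t\ge t+d$, and only then does Lemma~\ref{IncM} push $\tilde{M}_{n-d}$ up to $n$. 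When $2d\le n<3d$ there is no such guarantee. For instance, take $G$ to be $K_4$ on $\{1,2,3,4\}$ with three pendant vertices $5,6,7$ all attached to vertex $1$. The complement $\K$ is a flag complex of dimension $3$ on $7$ vertices ($d=4$, $n-d=3$, $n<3d$), not a cone and not a join, yet the matching number of $G$ is $2$, so by Gallai the edge cover number is $5>n-d$, and $\tilde{M}_{n-d}<n$. In cases like this Lemma~\ref{HomRed} does not apply, and the paper instead splits on whether $G$ has a vertex of degree $\ge 3$, writing $\K$ as the union of the induced subcomplexes $\K-v$ and $\K-\{u_1,u_2,u_3\}$ (or $\K-\{u_1,u_2\}$ in the path/cycle case) and verifying $\tilde{U}(\K)\ge \tilde{U}(\K_1)+\tilde{U}(\K_2)$ directly using $\tilde{M}_i\ge 2i$ and $\tilde{M}_{n-d}\ge \tfrac{3}{2}(n-d)$. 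You gesture at ``case analysis on the structure of $G$'' and ``union reductions,'' which is the right instinct, but the proposal does not actually locate this decomposition or the estimate that closes the induction.

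The lower-bound argument is also off. You want to \emph{upper}-bound $\prod_{i=1}^{n-d}\tilde{m}_i/(n-d)!$ and compare it to $f_{d-1}(\K)$, but the bound you emphasize, $\tilde{m}_i\ge k(i)$ with $\binom{k(i)}{2}\ge i$, goes in the wrong direction: a lower bound on the $\tilde{m}_i$ cannot cap the product, and the asymptotic ``$k(i)\sim\sqrt{2i}$ so the product decays'' does not yield the inequality you need. The observation you relegate to handling ``boundary terms,'' namely $\tilde{m}_i\le i+1$, is in fact the whole argument in the paper's Theorem~\ref{FlagLB}: after reducing (via Theorem~\ref{TenRed}) to $\K$ not a join, the missing-edge graph $G$ is connected, a spanning-tree order of its vertices exhibits $t$ generators supported on $t+1$ vertices, hence $\tilde{m}_t\le t+1$ for all $t\le n-d$, giving $\tilde{L}(\K)\le n-d+1$; Cohen--Macaulayness then gives $f_{d-1}(\K)\ge h_0+h_1=n-d+1$. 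Replacing your clique/independence-number estimate with this connectivity argument would fix the lower bound entirely.
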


We will prove the lower bound and upper bound parts of Theorem \ref{Quad} separately.  Using polarization, we will assume $I = I_{\K}$ for some flag complex $\K$, and we will use $f_{d-1}(\K)$ as $e(S/I)$.  Then we will examine when equality on each bound is attained.

As before, we will use $\tilde{L}(\K)$ to denote the conjectured Taylor lower bound on $f_{d-1}(\K)$ and $\tilde{U}(\K)$ to denote the conjectured Taylor upper bound on $f_{d-1}(\K)$.

\begin{theorem}
\label{FlagLB}
Let $\K$ be a Cohen-Macaulay flag complex.  Then $\K$ satisfies the Taylor lower bound conjecture.
\end{theorem}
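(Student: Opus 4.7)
The plan is to reduce to the case where the non-edge graph of $\K$ is connected, and then apply a spanning-tree argument in that graph. Let $G$ denote the graph on $V(\K)$ whose edges are the $2$-element minimal non-faces of $\K$ (which, since $\K$ is flag, are all of the minimal non-faces). I first observe that the connected components $C_1, \ldots, C_r$ of $G$ induce a join decomposition $\K = \K[C_1] \star \cdots \star \K[C_r]$: the absence of non-edges between distinct components forces every cross-pair to be an edge of $\K$, and by flag-ness every union of within-component cliques is a face. Each factor $\K[C_j]$ is itself flag, Cohen-Macaulay (the join of complexes is CM iff each factor is), and has non-edge graph $G[C_j]$, which is connected by construction.

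Invoking Theorem \ref{TenRed} in its lower-bound form, I then reduce to the case where $\K$ is join-indecomposable, i.e.\ $G$ is connected. If $n = d$ the claim is trivial (both sides equal $1$), so assume $n > d$. Being connected on $n$ vertices, $G$ contains a spanning tree $T$ with $n-1$ edges. I order the edges of $T$ via a depth-first traversal from a root, producing a sequence $\m_1, \ldots, \m_{n-1}$ of non-edges of $\K$ such that each $\m_i$ contributes exactly one new vertex to the running support; hence the first $i$ of these edges are supported on exactly $i+1$ vertices. This yields
\[
\tilde{m}_i(\K) \leq i + 1 \quad \text{for all } 1 \leq i \leq n - d,
\]
and multiplying through gives $\tilde{L}(\K) \leq (n-d+1)!/(n-d)! = n-d+1$.

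To finish, I use that $\K$ is Cohen-Macaulay: its $h$-vector $(h_0, \ldots, h_d)$ is componentwise non-negative with $h_0 = 1$ and $h_1 = n-d$, so
\[
f_{d-1}(\K) = h_0 + h_1 + \cdots + h_d \geq 1 + (n-d) = n-d+1 \geq \tilde{L}(\K),
\]
which is the desired inequality.

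I expect the main conceptual obstacle to be recognizing the structural reduction: flag complexes decompose as joins along the components of their non-edge graph, so combined with Theorem \ref{TenRed} this reduces the problem to the join-indecomposable case. Once that reduction is in place, the spanning-tree bound $\tilde{m}_i \leq i+1$ is elementary, and the Cohen-Macaulay hypothesis is needed only at the very end, to guarantee the standard estimate $f_{d-1}(\K) \geq n-d+1$. Notably, CM plays no role in controlling the Taylor shifts themselves; it enters only in comparing $\tilde{L}(\K)$ to the multiplicity.
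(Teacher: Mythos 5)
Your proof is correct and takes essentially the same approach as the paper: the paper likewise reduces to the join-indecomposable case via Theorem \ref{TenRed}, observes that the non-edge graph $G$ is then connected, and derives $\tilde{m}_t \leq t+1$ from an enumeration of the vertices in which each new vertex is adjacent in $G$ to an earlier one (your spanning-tree/DFS ordering), finishing with the $h$-vector nonnegativity estimate $f_{d-1}(\K) \geq n-d+1$.
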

\begin{proof}
If $\K = \K_1 \star \K_2$, then $\K_1$ and $\K_2$ are both Cohen-Macaulay flag complexes.  Thus by Theorem \ref{TenRed}, we may assume without loss of generality that $\K$ is not the join of two complexes.

Let $G$ be the graph whose \textit{edge ideal} is $I_\K$, that is, the vertex set of $G$ is $[n]$, and $\{u,v\}$ is an edge in $G$ if and only if $x_u x_v \in I_\K$, or $\{u,v\}$ is not an edge in $\K$.  Since $\K$ is not the join of two simplicial complexes, $G$ is connected.  Therefore, there is an enumeration of the vertices of $G$, $(i_1,i_2,\ldots,i_n)$, with the following properties: for each $2 \leq t \leq n$, there exists $s_t \in [t-1]$ such that $i_{s_t}i_t$ is an edge in $G$.  Then for all $1 \leq t \leq n-1$, there exists $t$ minimal generators of $I_\K$ supported on at most $t+1$ vertices, namely $$\{x_{i_{s_2}}x_{i_2},x_{i_{s_3}}x_{i_3},\ldots,x_{i_{s_{t+1}}}x_{i_{t+1}}\}.$$ Hence $m_t \leq t+1$ for $1 \leq t \leq n-d$ and so $\tilde{L}(\K) \leq n-d+1$.

Since $\K$ is Cohen-Macaulay, $h_i(\K) \geq 0$ for $0 \leq i \leq d$.  Also, $h_{0}(\K) = 1$ and $h_1(\K) = n-d$.  It follows that $f_{d-1}(\K) \geq n-d+1$, proving the result.
\end{proof}

\begin{theorem}
\label{FlagUB}
Let $\K$ be a flag complex.  Then $\K$ satisfies the Taylor upper bound conjecture.
\end{theorem}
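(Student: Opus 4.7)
The plan is to reduce to flag complexes on few vertices via Lemma~\ref{n3d}, then prove the Taylor upper bound inductively using the join and union machinery of Sections~\ref{TensorSection} and~\ref{UnionSection}. Flag complexes are closed under induced subcomplexes --- the minimal non-faces of $\K[W]$ are precisely those of $\K$ supported on $W$, still of size two --- so Lemma~\ref{n3d} applies with $\mathcal{C}$ the class of flag complexes and reduces the problem to the case $n < 3d$.

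Fix $n < 3d$ and induct on $n$. The base cases $n \leq d+1$ are routine. If $\K = \K_1 \star \K_2$ is a nontrivial join (in particular if $\K$ is a cone), each $\K_i$ is a flag complex on fewer vertices, so by induction each satisfies the Taylor upper bound conjecture; Corollary~\ref{MinTay} identifies $\T_{\K_1} \otimes \T_{\K_2}$ with $\T_\K$, and Theorem~\ref{TenRed} transfers the bound to $\K$. Otherwise $\K$ is not a join. Picking any minimal non-face $\{u,v\}$, decompose $\K = (\K - u) \cup (\K - v)$ into two induced flag subcomplexes of dimension $d-1$ on $n-1$ vertices, each satisfying the Taylor upper bound by the inductive hypothesis, with intersection of $n-2$ vertices. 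When the complement graph $G$ of $\K$'s $1$-skeleton has matching number $\mu(G) \geq d$ --- equivalently, by Gallai's theorem, when $\tilde M_{n-d}(\K) = n$ --- Proposition~\ref{UnionCor} transfers the bound to $\K$ provided $n - 2 \leq 2d - 2$, that is, whenever $n \leq 2d$.

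The main obstacle is the intermediate range $2d < n < 3d$, where Gallai's bound gives only $\mu(G) \geq \lceil (n-d)/2 \rceil$, which may be less than $d$, so the two-piece union is too coarse and Lemma~\ref{HomRed} is not immediately available. I expect this range to require sharpening the lower bounds on the Taylor maximal shifts --- for instance, producing $i$ edges of $G$ covering $\max(2i,\,i+d)$ vertices by exploiting the fact (central to Lemma~\ref{n3d}) that every $(d+1)$-subset of $V(\K)$ contains a minimal non-face --- or else iterating the union decomposition to peel off several vertices at once via a finer variant of Proposition~\ref{UnionCor}. This intermediate range, bridging $n \leq 2d$ (handled above) and $n \geq 9d+1$ (handled by Theorem~\ref{LargeComplexUpper}), is where the delicate casework concentrates.
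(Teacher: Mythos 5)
Your outline matches the paper's --- reduce via Lemma~\ref{n3d} to $n < 3d$, handle joins (in particular cones) via Theorem~\ref{TenRed} and Corollary~\ref{MinTay}, and otherwise decompose $\K$ as a union of proper induced flag subcomplexes --- and you correctly diagnose where your chosen decomposition fails. The gap you flag is real and fatal to the symmetric split: writing $\K = (\K-u)\cup(\K-v)$ along a single non-edge $\{u,v\}$ drops only one vertex, hence only one Taylor maximal shift, from each piece, so the best one can say is $\tilde U(\K-u)+\tilde U(\K-v) \le 2\cdot\frac{n-d}{\tilde M_{n-d}(\K)}\tilde U(\K)$, and with $\tilde M_{n-d}(\K) \le n < 3d$ this ratio need not be below $1$. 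Also note that in the subcase where you invoke Gallai's theorem to get $\tilde M_{n-d}(\K)=n$, you should appeal directly to Lemma~\ref{HomRed} rather than Proposition~\ref{UnionCor}; the latter adds nothing there.

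The paper's fix is an \emph{asymmetric} decomposition keyed to the maximum degree of the complement graph $G$ of the $1$-skeleton, plus a quantitative lower bound on the Taylor maximal shifts. One first shows, using that every $(d+1)$-subset of vertices contains a non-edge and Lemma~\ref{IncM}, that a greedy matching in $G$ gives $\tilde M_i(\K)\ge 2i$ for $i\le\lfloor (n-d+1)/2\rfloor$, and hence $\tilde M_t(\K)\ge \tfrac{3}{2}t$ for every $1\le t\le n-d$; this is exactly the sharpening you gesture at in your last paragraph. Now if $G$ has a vertex $v$ of degree at least $3$ with neighbors $u_1,u_2,u_3$, one writes $\K = (\K-v)\cup(\K-\{u_1,u_2,u_3\})$ (both pieces are induced, hence flag, and the union covers $\K$ because $\K$ being flag means no face contains both $v$ and any $u_i$). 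The first piece costs a multiplicative factor $\tfrac{n-d}{\tilde M_{n-d}(\K)}\le\tfrac{2}{3}$; the second, dropping three shifts, costs at most $\bigl(\tfrac{2}{3}\bigr)^3=\tfrac{8}{27}$; and $\tfrac{2}{3}+\tfrac{8}{27}=\tfrac{26}{27}<1$, so induction on $n$ closes this case. If $G$ has maximum degree at most $2$, it is a path or a cycle, and one either has $\tilde M_{n-d}(\K)=n$ (so Lemma~\ref{HomRed} applies directly) or the Taylor maximal shift sequence is forced to be essentially $(2,4,\dots,2(n-d))$; the degree-$2$ version of the same asymmetric split, $\K=(\K-v)\cup(\K-\{u_1,u_2\})$, then finishes. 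So the missing ingredient is not a finer variant of Proposition~\ref{UnionCor} but rather choosing the second piece to drop \emph{enough} vertices that the product of $\tfrac{2}{3}$ factors beats the additive cost of taking a union.
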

\begin{proof}
Let $\K$ have dimension $d-1$ and $n$ vertices.  Since all induced subcomplexes of $\K$ are also flag, then by Lemma \ref{n3d} we may assume without loss of generality that $n < 3d$.  Also, as in the proof of Theorem \ref{FlagLB}, we may assume without loss of generality that $\K$ is not the join of two nonempty simplicial complexes.

Let $G$ be the edge ideal of $I_\K$, as in the proof of Theorem \ref{FlagLB}.  Again, since $\K$ is not a join of two complexes, $G$ is connected.  We will consider two cases: first the case that $G$ has a vertex $v$ of degree $3$ or greater, and second the case that $G$ has no such vertex.

Assume $G$ has a vertex $v$ of degree at least $3$, with neighbors $u_1,u_2,u_3$.  Since $\dim \K = d-1$, all subsets of $d+1$ vertices of $\K$ support a minimal generator, and hence $\tilde{M}_i(\K) = 2i$ for $i \leq \lfloor \frac{n-d+1}{2}\rfloor$.  We see this by identifying $\m_1 \in \GEN(I_\K)$, removing the two vertices $x_1,y_1$ that support $\m_1$, identifying $\m_2 \in \GEN(I_{\K-\{x_1,y_1\}})$, and so on.  $\K$ cannot be written as the join of two complexes, so in particular $\K$ is not a cone.  Hence by Lemma \ref{IncM}, $M_i \geq i + \lfloor \frac{n-d+1}{2}\rfloor$ for $i \geq \lfloor \frac{n-d+1}{2}\rfloor$, which yields $$M_{n-d} \geq n-d+\lfloor \frac{n-d+1}{2}\rfloor \geq \frac{3}{2}(n-d).$$  Similarly, $M_t \geq \frac{3}{2}t$ for all $1 \leq t \leq n-d$.

Since $vu_1, vu_2, vu_3$ are not edges in $\K$, $\K$ is a union of induced subcomplexes $(\K-v)$ and $(\K-\{u_1,u_2,u_3\})$.  Then, as in Section \ref{UnionSection}, we can inductively reduce the Taylor upper bound conjecture on $\K$ to the Taylor upper bound conjecture on $\K_1 = \K-v$ and $\K_2 = \K-\{u_1,u_2,u_3\}$ if we can show that $\tilde{U}(\K) \geq \tilde{U}(\K_1)+\tilde{U}(\K_2)$.

The Taylor maximal shift sequence is nonincreasing under induced subcomplexes.  Hence $$\tilde{U}(\K_1) \leq \frac{n-d}{\tilde{M}_{n-d}}\tilde{U}(\K) \leq \frac{2}{3}\tilde{U}(\K).$$  By a similar calculation $\tilde{U}(\K_2) \leq \frac{8}{27}\tilde{U}(\K)$.  It follows that $\tilde{U}(\K) \geq \tilde{U}(\K_1)+\tilde{U}(\K_2)$ as desired.  This completes the case that $G$ has a vertex of degree $3$ or greater.

Now we consider the case that $G$ does not have a vertex of degree three or greater.  Since $G$ is connected, $G$ is either a path or a cycle.  Without loss of generality, suppose $12,23,\ldots,(n-1)n$ are edges in $G$.  Then $x_1x_2,x_3x_4,\ldots,x_{2\lfloor \frac{n}{2} \rfloor-1}x_{2\lfloor \frac{n}{2} \rfloor} \in I_\K$, so $\tilde{M}_i = 2i$ for $i \leq \lfloor \frac{n}{2} \rfloor$.  Again by Lemma \ref{HomRed}, we may assume without loss of generality $\tilde{M}_{n-d} < n$.  Then by Lemma \ref{IncM}, $\tilde{M} = (2,4,\ldots,2(n-d))$ or $\tilde{M} = (2,4,\ldots,2(n-d-1),2(n-d)-1)$.  If $n=2$, then $\K$ is a pair of isolated vertices and satisfies the Taylor upper bound conjecture.  Assume $n \geq 3$, so that $G$ contains a vertex $v$ with two neighbors: $u_1$ and $u_2$.  Let $\K_1 = \K-v$ and $\K_2 = \K-\{u_1,u_2\}$.  Then $\K = \K_1 \cup \K_2$.  It is easy to verify that $U(\K) \geq U(\K_1)+U(\K_2)$ by calculations similar to those above, so that we may inductively reduce the Taylor upper bound conjecture on $\K$ to the Taylor upper bound conjecture on $\K_1$ and $\K_2$.

We have shown that we may inductively reduce all flag complexes to simplices either by $\tilde{M}_{n-d}=n$ and applying Lemma \ref{HomRed}, by expressing $\K$ as the join of two flag complexes, or by expressing $\K$ as the union of two flag complexes.  Hence all flag complexes satisfy the Taylor upper bound conjecture.
\end{proof}

We now turn our attention to the question of when these bounds are attained, starting with the lower bound.  We will focus on the case $I = I_\K$ for a Cohen-Macaulay flag complex $\K$.  By Corollary \ref{CrossPolytope}, if $\K = \K_1 \star \K_2$ and neither $\K_1$ nor $\K_2$ are simplices, then $\K$ attains the lower bound only if $\K$ is the join of the boundary of a cross polytope and a simplex, and otherwise we have $\tilde{L}(\K) \leq n-d+1$.  If $\tilde{L}(\K) \leq n-d+1$, then since $\K$ is Cohen-Macaulay, $\K$ attains the lower bound if and only if $f_{d-1}(\K) = n-d+1$ and $\tilde{L}(\K) = n-d+1$, which is equivalent to $\tilde{m}_i = i+1$ for all $1 \leq i \leq n-d$.

If $\K$ is Cohen-Macaulay and $f_{d-1}(\K) = n-d+1$, we say that $\K$ is a \textit{generalized tree}.  Equivalently, there is an enumeration of the facets of $\K$, $F_1, F_2, \ldots, F_{n-d+1}$, such that for $2 \leq i \leq n-d+1$, $F_i \cap (F_1 \cup \ldots \cup F_{i-1})$ is a face of dimension $d-2$.  In the case $d=2$, a generalized tree is a tree in the usual graph theoretic sense.

\begin{proposition}
Suppose $\K$ is a Cohen-Macaulay flag simplicial complex and $\K$ attains the Taylor lower bound.  Then, up to isomorphism, $\K$ is the join of a simplex and one of the following: \newline
1) two isolated vertices, \newline
2) a path of length four, \newline
3) the two-dimensional complex on six vertices with facets $\{123,234,345,456\}$, \newline
4) the boundary of a cross polytope.
\end{proposition}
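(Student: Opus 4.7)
The plan is to follow the dichotomy established just before the proposition. If $\K = \K_1 \star \K_2$ decomposes as a nontrivial join in which neither factor is a simplex, then by Corollary~\ref{CrossPolytope} (and the equality analysis in Theorem~\ref{TenEqLB}) applied to this case, attainment forces $\K$ to be the join of a simplex and the boundary of a cross polytope, i.e.\ case~(4). Thus it suffices to treat the complementary case, in which $\tilde{L}(\K) \leq n-d+1$ and attainment is equivalent to $f_{d-1}(\K) = n-d+1$ (so $\K$ is a generalized tree) together with $\tilde{m}_i = i+1$ for all $1 \leq i \leq n-d$.

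In this case I would first peel off any cone apex: if some $v \in V(\K)$ is adjacent to every other vertex, then $\K = \{v\} \star (\K - v)$ and all the hypotheses (Cohen--Macaulay, flag, generalized tree, $\tilde{m}_i = i+1$) descend to $\K - v$ with $d$ and $n$ each dropping by one. Iterating identifies the maximal simplex factor $\Sigma$ of the proposition's statement, leaving a core $\K_0$ with no cone apex that I must classify.

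To analyze $\K_0$, I would pass to the edge-complement graph $G$ on $V(\K_0)$, whose edges are the minimal generators of $I_{\K_0}$ (each a pair by flagness). The shift condition $\tilde{m}_i = i+1$ for $i \leq n_0 - d_0$ is equivalent to: every induced subgraph of $G$ on at most $n_0 - d_0$ vertices is a forest, and for each such $i$ there exist $i$ edges of $G$ spanning exactly $i+1$ vertices. The ``no cone apex'' condition is that $G$ has no isolated vertex, and the facet enumeration $F_1,\dots,F_{n_0-d_0+1}$ from the generalized-tree property places strong combinatorial constraints on how top-dimensional faces of $\K_0$ can share $(d_0-2)$-faces.

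The remainder of the proof is a finite case analysis on $\dim \K_0$, matching each low dimension to cases~(1), (2), (3) of the proposition: dimension~$0$ is expected to force $\K_0$ to be two isolated vertices; dimension~$1$ to force the specific path of case~(2) by combining the triangle-free (flag) constraint on $\K_0$ with the facet shelling; and dimension~$2$ to force the triangle path of case~(3) by a similar argument sharpened by the girth bound on $G$. Higher dimensions will require showing that no new primitive core appears outside the cross-polytope scenario already absorbed into case~(4). The main obstacle will be the combinatorial bookkeeping in this last classification step: carefully ruling out all other trees and generalized trees that might superficially satisfy the shift conditions, which I expect to carry out by an inductive leaf-peeling argument on $G$ matched against the facet shelling of $\K_0$.
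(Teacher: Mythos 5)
Your framework is the right one and matches the paper's: first dispose of the nontrivial join case via Corollary~\ref{CrossPolytope} (case~4), then peel off cone apexes to reduce to a core $\K_0$ with no cone vertex, pass to the complement graph $G$ of the $1$-skeleton, and observe that attainment forces $\K_0$ to be a generalized tree with $\tilde m_i = i+1$ for $1 \le i \le n-d$. Your translation of the shift condition into a girth/forest condition on induced subgraphs of $G$ of size $\le n-d$ is also correct and is essentially the mechanism the paper uses, phrased differently: a $3$-cycle in $G$ (i.e.\ three mutually non-adjacent vertices of $\K$) forces $\tilde m_3 \le 3$, a $4$-cycle forces $\tilde m_4 \le 4$, and each strictly lowers $\tilde L(\K)$ below $n-d+1 \le f_{d-1}(\K)$ whenever the codimension is large enough for that $\tilde m_i$ to appear in the product.

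The genuine gap is the step you label ``the main obstacle'' and defer to ``an inductive leaf-peeling argument on $G$ matched against the facet shelling,'' i.e.\ ruling out $d \ge 3$ except for the single complex in case~(3). That is not routine bookkeeping; it is the crux of the paper's proof and a concrete argument has to be supplied. The paper does not do a dimension-by-dimension induction at all. Instead, for $d \ge 3$ it shows directly that $n-d \le 3$: using the generalized-tree structure together with the absence of three mutually non-adjacent vertices, it argues that $\K$ has exactly two ``end'' vertices $u, v$ (the vertices of degree $d-1$), finds $u'$ adjacent to $u$ playing the analogous role in $\K - \{u,v\}$, similarly $v'$, and then observes that $uv, uv', u'v, u'v'$ are all non-edges of $\K$ --- a $4$-cycle in $G$ on only $4$ vertices. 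This gives $\tilde m_4 \le 4$, which is fatal to equality when $n-d \ge 4$; combined with the observation that a non-cone flag complex of dimension $d-1$ has $n \ge 2d$ vertices, one gets $n-d = 3$ and $d = 3$, which pins down case~(3). Without this $4$-cycle construction (or a genuine substitute for it), your proposal does not prove the statement; the leaf-peeling on $G$ you envisage would need to be carried out, and there is no obvious reason it terminates in the right list. You should either reproduce the paper's $u,v,u',v'$ argument or supply an equally explicit mechanism that bounds $n-d$ and $d$.
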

\begin{proof}
If $\K$ is a cone, we can without loss of generality remove the apex vertex from $\K$.  Hence we will assume $\K$ is not a cone.  If $\K = \K_1 \star \K_2$ and neither $\K_1$ nor $\K_2$ are simplices, then by Theorem \ref{CrossPolytope}, condition 4 applies.  Henceforth we will assume this is not the case.  Then $\K$ must be a generalized tree and $\tilde{m}_i(\K) = i+1$ for $1 \leq i \leq n-d$.

If $\K$ has three mutually disconnected vertices, then $\tilde{m}_3(\K) = 3$ and $\K$ misses the Taylor lower bound.  Thus if $\K$ has dimension at most one, it is easy to see $\K$ must satisfy one of the conditions above.

Suppose $d \geq 3$.  If $\K$ attains the Taylor lower bound, then $\K$ does not have three mutually disconnected vertices and thus has exactly two vertices of degree $d-1$: $u$ and $v$.  Since $\K$ is not a cone, $\K$ contains at least $2d$ vertices.  $\K-\{u,v\}$ contains at least $2d-2$ vertices and is also a generalized tree.  There exists $u' \in V(\K)-\{u,v\}$ such that $uu'$ is an edge in $\K$ and $u'$ has degree $d-1$ in $\K-\{u,v\}$.  Similarly, there exists $v' \in V(\K)-\{u,v\}$ such that $vv'$ is an edge in $\K$ and $v'$ has degree $d-1$ in $\K-\{u,v\}$.  It follows that $uv, uv', u'v, u'v'$ are not edges in $\K$, and if $n-d \geq 4$, $\tilde{m}_4(\K) = 4$.  Hence $\K$ cannot attain the Taylor lower bound if $n-d \geq 4$.  It follows that $n-d = 3$ and $d=3$ by $n \geq 2d$.  Condition 3 applies in this case.
\end{proof}

If $\K$ attains the Taylor upper bound with equality and cannot be written as a join of two complexes, then $G$ in the proof of Theorem \ref{FlagUB} cannot have a vertex of degree $3$.  Hence $G$ is either a path of a cycle.  It is easy to see from the proof that the inequality $\tilde{U}(\K) \geq \tilde{U}(\K_1) + \tilde{U}(\K_2)$ is an equality only if $n=2$ or $n=3$.

\begin{proposition}
Let $\K$ be a flag complex.  Then $\K$ attains the Taylor upper bound if and only if $\K$ is the join of a simplex with one of the following: \newline
1) the boundary of a cross-polytope, \newline
2) two isolated vertices, \newline
3) three isolated vertices.
\end{proposition}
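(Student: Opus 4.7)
The plan is to prove necessity by reducing $\K$ to a short list of base cases and then verify sufficiency by direct computation. First, I would dispose of cone vertices: if $w$ lies in every facet of $\K$, then $\K = \{w\} \star (\K - w)$, and since $I_\K$ and $I_{\K-w}$ have the same minimal generators and the same codimension, $\tilde M(\K) = \tilde M(\K - w)$ and $f_{d-1}(\K) = f_{d-2}(\K-w)$, so $\K$ attains the Taylor upper bound if and only if $\K - w$ does. Iterating, it suffices to show that a non-cone flag complex attains the bound if and only if it is a boundary of a cross polytope, two isolated vertices, or three isolated vertices (the three listed cases with the simplex part empty).

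For the join case, suppose $\K = \K_1 \star \K_2$ with both factors nonempty; since $\K$ is non-cone, neither factor is a simplex. By strong induction on $|V(\K)|$, each $\K_i$ falls in one of the three listed base classes, all of which are Cohen--Macaulay, so Theorem \ref{TenEqUB} applies. Equality in the Taylor upper bound for $\K$ then forces $\tilde M(\K_1)$ and $\tilde M(\K_2)$ to both have the form $(a, 2a, \ldots)$ for a common $a$. The Taylor maximal shift sequences of the three candidates are $(2, 4, \ldots, 2b)$ for a $2b$-cross polytope boundary, $(2)$ for two isolated vertices, and $(2, 3)$ for three isolated vertices; only the first family has the required linear shape (with $a = 2$), so both $\K_i$ are cross polytope boundaries and their join is another one, landing in case (1).

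In the non-join case, the remark preceding the proposition shows that $G$, the graph of non-edges of $\K$, has no vertex of degree at least three, hence (being connected) is a path or a cycle. The proof of Theorem \ref{FlagUB} reduces such a $\K$ to $\K_1 = \K - v$ and $\K_2 = \K - \{u_1, u_2\}$ for a $G$-vertex $v$ of degree two with neighbors $u_1, u_2$. Using the explicit shift sequences $\tilde M_i(\K) = 2i$ for $i < n - d$ with $\tilde M_{n-d}(\K) \in \{2(n-d), 2(n-d) - 1\}$ (from Lemmas \ref{IncM} and \ref{HomRed}), I would show by direct comparison of products that $\tilde U(\K) > \tilde U(\K_1) + \tilde U(\K_2)$ whenever $n \geq 4$, ruling out large $n$. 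The residual cases are $n = 2$ (giving two isolated vertices, case (2)) and $n = 3$, which splits into $G$ a triangle (three isolated vertices, case (3)) and $G$ a path on three vertices ($\K$ is an edge plus an isolated vertex, for which $\tilde U(\K) = 2 > 1 = f_1(\K)$, excluded).

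Sufficiency is a direct computation in each listed case, propagated through simplex joins by the cone reduction: for a $2b$-cross polytope boundary, $f_{b-1} = 2^b = (2 \cdot 4 \cdots 2b)/b!$; the $n = 2$ and $n = 3$ cases are immediate. The step I expect to be most delicate is the strict inequality $\tilde U(\K) > \tilde U(\K_1) + \tilde U(\K_2)$ in the non-join case for $n \geq 4$: it is labeled ``easy to see'' in the text but requires tracking the two possible values of $\tilde M_{n-d}(\K)$ separately, accounting for how the same shifts behave for $\K_1$ and $\K_2$ via Lemma \ref{IncM}, and comparing products of the form $\prod 2i$ with small perturbations at the top index.
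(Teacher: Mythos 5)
Your proposal is correct and follows essentially the same route the paper sketches: dispose of cone vertices, use Theorem \ref{TenEqUB} (via the $\tilde M_i = ai$ shape condition, with two isolated vertices being the $b=1$ cross-polytope boundary rather than a genuinely separate shape) in the join case, and reduce the non-join case to $G$ being a path or cycle with the union inequality strict once $n \geq 4$. One small point to watch: the proof of Theorem \ref{FlagUB} also takes a Lemma \ref{HomRed} branch when $\tilde M_{n-d}(\K)=n$ rather than the $\K_1 \cup \K_2$ branch, so the equality analysis there (all $\K - v$ attaining the bound with matching shifts) should be checked separately, though it leads to the same short list and this point is glossed over in the paper's own one-paragraph sketch as well.
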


We conclude with an extension of the Taylor upper bound conjecture to ideals that are ``almost'' quadratic.

\begin{theorem}
\label{AlmostQuadratic}
Let $I$ be a monomial ideal minimally generated by monomials $\m_1, \m_2, \ldots, \m_r$.  Suppose for $2 \leq i \leq r$, $\m_i$ has degree $2$.  Then $S/I$ satisfies the Taylor upper bound conjecture.
\end{theorem}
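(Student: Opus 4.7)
The plan is to polarize, reducing to the squarefree case $I = I_\K$ with $F_0 = \{i_1, \dots, i_k\}$ the unique non-quadratic minimal non-face of $\K$. If $k \leq 2$ then $\K$ is flag and Theorem~\ref{FlagUB} gives the conclusion, so I would induct on $k$, the degree of the non-quadratic generator, with an inner induction on the number of vertices $n$. If $\K$ is a cone over some apex $v$, then $\K = \{v\} \star (\K - v)$ and Corollary~\ref{MinTay} reduces to $\K - v$, so I may assume $\K$ is not a cone.

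Next I would split on the value of $\tilde{M}_{n-d}(\K)$. If $\tilde{M}_{n-d}(\K) = n$, then Lemma~\ref{HomRed} reduces the problem to handling each deletion $\K - v$: for $v \in F_0$, $\K - v$ is flag and Theorem~\ref{FlagUB} applies; for $v \notin F_0$, $\K - v$ is almost-quadratic with the same $k$ and one fewer vertex, and the inner induction applies. So I may further assume $\tilde{M}_{n-d}(\K) < n$, in which case Lemma~\ref{IncM} forces $(\tilde{M}_i(\K))_{i=1}^{n-d}$ to be a strictly increasing sequence of positive integers.

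In this principal case, decompose
$$ \K \;=\; (\K - i_1) \;\cup\; \bigl(\{i_1\} \star \lk_{\K}(i_1)\bigr), $$
a union of the induced flag subcomplex $\K - i_1$ with the non-induced closed star of $i_1$. Their intersection is $\lk_{\K}(i_1)$, which has dimension at most $d - 2$, so
$$ f_{d-1}(\K) \;=\; f_{d-1}(\K - i_1) \;+\; f_{d-1}\bigl(\{i_1\} \star \lk_{\K}(i_1)\bigr). $$
The first summand is bounded by $\tilde{U}(\K - i_1)$ via Theorem~\ref{FlagUB}. The link $\lk_{\K}(i_1)$ is almost-quadratic with non-quadratic minimal non-face $\{i_2, \dots, i_k\}$ of reduced degree $k - 1$, so by the outer inductive hypothesis it satisfies the Taylor upper bound, and Corollary~\ref{MinTay} carries this to the join $\{i_1\} \star \lk_{\K}(i_1)$.

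The main obstacle is the combinatorial inequality
$$ \tilde{U}(\K - i_1) \;+\; \tilde{U}\bigl(\{i_1\} \star \lk_{\K}(i_1)\bigr) \;\leq\; \tilde{U}(\K). $$
Since $\GEN(I_{\K - i_1}) \subseteq \GEN(I_\K)$, one has $\tilde{M}_i(\K - i_1) \leq \tilde{M}_i(\K)$ termwise; and since the substitution $\{i_2, \dots, i_k\} \mapsto \m_1$ lifts any optimizing $i$-subset of $\GEN(I_{\lk_{\K}(i_1)})$ to an $i$-subset of $\GEN(I_\K)$ whose LCM gains precisely the variable $x_{i_1}$, one obtains $\tilde{M}_i(\lk_{\K}(i_1)) \leq \tilde{M}_i(\K) - 1$ (the subcase where the link's optimum avoids the special generator is handled by replacing a suboptimal quadratic with $\m_1$ and exploiting $k \geq 3$). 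Setting $a_i := \tilde{M}_i(\K)$, the target inequality then reduces to
$$ \prod_{i=1}^{n-d}\Bigl(1 - \tfrac{1}{a_i}\Bigr) \;\leq\; 1 - \frac{n-d}{a_{n-d}}, $$
which follows by a short induction from the strict integer monotonicity of $(a_i)$. I expect the trickiest step to be verifying the shift bound $\tilde{M}_i(\lk_{\K}(i_1)) \leq \tilde{M}_i(\K) - 1$ when the optimum for the link avoids the special generator, together with the bookkeeping when quadratic non-faces involve $i_1$ (which shortens the codimension of $\{i_1\} \star \lk_{\K}(i_1)$ and requires a parallel but simpler algebraic estimate).
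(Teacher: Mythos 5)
Your approach differs genuinely from the paper's. You polarize, induct on the degree $k$ of the non-quadratic generator with an inner induction on $n$, reduce cones, split on whether $\tilde M_{n-d}(\K)=n$ via Lemma~\ref{HomRed}, and in the remaining case decompose $\K$ as the union of the induced deletion $\K - i_1$ and the closed star of $i_1$. The paper instead takes $\m_1 = x_1\cdots x_t$ and, for each $i\in[t]$, forms the (non-induced) subcomplex $\K_i = \K - \{F : [t]\setminus\{i\}\subseteq F\}$, whose generating set is obtained from $\GEN(I_\K)$ by replacing $\m_1$ with $\m_1/x_i$; since no face of $\K$ contains two sets $[t]\setminus\{i\}$, one gets $\sum_{i=1}^t f_{d-1}(\K_i)\geq (t-1)f_{d-1}(\K)$, while the generator substitution immediately gives $\tilde U(\K_i)\leq\frac{t-1}{t}\tilde U(\K)$, and the averaging closes the induction on $t$ with no case split on $\tilde M_{n-d}$ and no appeal to Lemma~\ref{HomRed}.

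However, your principal case rests on the claimed shift bound $\tilde M_i(\lk_\K(i_1)) \leq \tilde M_i(\K)-1$, and that bound is false in exactly the subcase you flag as tricky, where the link's optimal $i$-subset avoids $\m_1' = \m_1/x_{i_1}$. Take $k=3$, $F_0=\{1,2,3\}$, and quadratic minimal non-faces $\{2,4\},\{3,5\},\{6,7\}$ on vertex set $\{1,\ldots,7\}$. Then $\GEN(I_\K)=\{x_1x_2x_3,x_2x_4,x_3x_5,x_6x_7\}$ and $\GEN(I_{\lk_\K(1)})=\{x_2x_3,x_2x_4,x_3x_5,x_6x_7\}$; one checks $\dim\K=3$, so $n-d=3$, $\K$ is not a cone, and $\tilde M_3(\K)=6<7=n$, so this $\K$ lies squarely in your principal case. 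But $\tilde M_3(\lk_\K(1))=6=\tilde M_3(\K)$, both achieved by $\{x_2x_4,x_3x_5,x_6x_7\}$; your proposed fix of swapping a quadratic for $\m_1$ fails here because every quadratic in the optimizing set owns a private variable that is lost on removal, so no swap raises the lcm degree. (Incidentally, in this same example $\K-1$ has dimension $2<3$, a codimension-bookkeeping wrinkle your write-up also does not address, though it happens to rescue the overall inequality here for an unrelated reason.) The structural issue is that a termwise drop of $1$ on the link's Taylor shifts is too strong an expectation; the paper's $\K_i$ construction sidesteps this entirely because the single-generator substitution $\m_1\mapsto\m_1/x_i$ produces a controlled drop concentrated at $\tilde M_1$.
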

\begin{proof}
We will prove the result by induction on the degree of $\m_1$.  If $\m_1$ has degree $2$, then $I$ is a quadratic ideal, and $S/I$ satisfies the Taylor upper bound conjecture by Theorem \ref{Quad}.

By polarization, we may assume without loss of generality that $I$ is a squarefree monomial ideal.  Also without loss of generality, $\m_1 = x_1x_2\ldots x_t$.  Let $I = I_\K$ for a simplicial complex $\K$, and suppose $\K$ has dimension $d-1$ and $n$ vertices.

Suppose $t \geq 3$.  Since $\m_1$ is a minimal generator of $I_\K$, $[t]-\{i\}$ is a face in $\K$ for $1 \leq i \leq t$.  Let $\K_i = \K-\{F \in \K: [t]-\{i\} \subseteq F\}$.  Every face in $\K$ contains at most one face of the form $[t]-\{i\}$, so $$\sum_{i=1}^t f_{d-1}(\K_i) \geq (t-1)f_{d-1}(\K).$$

The minimal generators of $I_{\K_i}$ are the same as the minimal generators of $I_\K$, except $\m_1$ is replaced by $\m_1/x_i$.  Hence $\tilde{M}_j(\K_i) \leq \tilde{M}_j(\K)$ for $1 \leq j \leq n-d$.  Also, $M_1(\K) = t$, whereas $M_1(\K_i) = t-1$.  It follows that $\tilde{U}(\K_i) \leq \frac{t-1}{t}\tilde{U}(\K)$.  For $1 \leq i \leq t$, $\K_i$ satisfies the Taylor upper bound conjecture by the inductive hypothesis.  Hence $\K$ satisfies the Taylor upper bound conjecture as well.
\end{proof}

\section*{Acknowledgements}
The author wishes to thank Isabella Novik for much helpful discussion and guidance, and also Manoj Kummini for many helpful comments on the draft version of this paper.

\end{document}